\numberwithin{equation}{section}
\newtheorem{thm}{Theorem}[section]
\newtheorem{defn}{Definition}[section]
\newtheorem{lem}[thm]{Lemma}
\newtheorem{prop}[thm]{Proposition}
\def\p{\psi}
\def\f{\phi}
\def\l{\lambda}
\def\sg{\sigma}
\def\z{\zeta}
\newcommand{\R}{\mathbb{R}}
\newcommand{\poly}{\mathcal{P}^n}					
\newcommand{\polyo}{\mathcal{P}^n_o}				
\newcommand{\polye}{\mathcal{P}^2}					
\newcommand{\polyeo}{\mathcal{P}^2_o}				
\newcommand{\GL}{\mathrm{GL}(n)}					
\newcommand{\SL}{\mathrm{SL}(n)}					
\newcommand{\SLe}{\mathrm{SL}(2)}					
\newcommand{\SLs}{\mathrm{SL}(3)}					
\newcommand{\norm}[1]{\left\Vert{#1}\right\Vert}		
\newcommand{\set}[1]{\left\{{#1}\right\}}			
\newcommand{\inp}[2]{#1\cdot#2}						
\newcommand{\conv}[1]{\left[#1\right]}				
\newcommand{\id}{{\bf1}}							
\newcommand{\bd}{\mathrm{bd}\,}						
\newcommand{\rint}{\mathrm{relint}\,}				
\newcommand{\intp}{\mathrm{int}\,}					
\newcommand{\aff}{\mathrm{aff}\,}					
\newcommand{\np}{\mathcal{N}}						
\newcommand{\npo}{\mathcal{N}_o}					
\newcommand{\spl}{\mathcal T^n_o}					
\newcommand{\splo}{\mathcal T^n_o\setminus\set{o}}	
\newcommand{\vo}[1]{B_{#1}}              			
\newcommand{\ve}{A}              					
\newcommand{\mv}{M^{1,0}}          				    
\newcommand{\fv}{M^{0,1}}							
\newcommand{\subjclass}[2][1991]{%
  \let\@oldtitle\@title%
  \gdef\@title{\@oldtitle\footnotetext{#1 \emph{Mathematics subject classification.} #2}}%
}
\newcommand{\keywords}[1]{%
  \let\@@oldtitle\@title%
  \gdef\@title{\@@oldtitle\footnotetext{\emph{Key words and phrases.} #1.}}%
}
\title{\bf{SL($n$) contravariant vector valuations}}
\author[1]{Jin Li}
\author[2,3]{Dan Ma}
\author[4]{Wei Wang}
\affil[1]{Institut f\"{u}r Diskrete Mathematik und Geometrie, Technische Universit\"{a}t Wien, 1040 Vienna, Austria\\
	\href{mailto: Jin Li}{lijin2955@gmail.com}}
\affil[2]{Department of Mathematics, Shanghai Normal University, Shanghai 200234, China\\\href{mailto: Dan Ma}{madan@shnu.edu.cn}}
\affil[3]{Department of Mathematics, Karlsruhe Institute of Technology (KIT), D-76128 Karlsruhe, Germany}
\affil[4]{School of Mathematics and Computational Science, Hunan University of Science and Technology,
	Xiangtan 411201, China\\\href{mailto: Wei Wang}{wwang@hnust.edu.cn}}
\date{}
\subjclass[2020]{52B45, 52A20}
\keywords{Facet vector, valuation, polytope, $\SL$ contravariance}
\begin{document}

\maketitle

\begin{abstract}
All SL($n$) contravariant vector valuations on polytopes in $\R^n$
are completely classified without any additional assumptions.
The facet vector is defined. It turns out to be the unique such valuation for $n\geq3$.
In dimension two, the classification corresponds to the case of SL(2) covariant valuations.
\end{abstract}


\section{Introduction}

The study of geometric notions which are
compatible with transformation groups are important tasks in geometry
as proposed in Felix Klein's Erlangen program in 1872.
As many functions defined on geometric objects satisfy
the inclusion-exclusion principle, the property of being a valuation
is natural to consider in the classification of those functions.
Here, a function $ Z $ defined on $\poly$, the space of all polytopes in $\R^n$,
and taking values in an abelian semigroup is called a {\em valuation} if
\begin{equation}\label{eqn:val}
	 Z (P)+ Z (Q)= Z (P\cup Q)+ Z (P\cap Q)
\end{equation}
for every $P,Q,P\cup Q\in\poly$. A function $ Z $ defined on
some subspace of $\poly$ is also called
a valuation if \eqref{eqn:val} holds
whenever $P,Q,P\cup Q,P\cap Q$ contained in this subspace.
Valuations also have their origins in Dehn's solution of
Hilbert's Third Problem in 1901.
The most famous result is Hadwiger's characterization theorem
which classifies all continuous and rigid motion invariant real valuations
on the space of convex bodies in $\R^n$.
This celebrated result initiated a systematic study on the classification
of valuations compatible with certain linear transforms.

These studies are also a classical part of geometry
with important applications in integral geometry
(see \cite[Chap. 7]{Gru07},\cite{KR97},\cite[Chap. 6]{Sch14}).
They turned out to be extremely fruitful and useful
especially in the affine geometry of convex bodies
(see \cite{Ale99,Ale01,HP14a,Kla96,LYL15,BF11}).
Examples are intrinsic volumes \cite{Lud02a,HP14b},
affine surface areas \cite{Lud10b,LR10},
the projection bodies \cite{Hab12b,Lud02b,Lud05,LL16},
the intersection bodies \cite{Lud06}
and other Minkowski valuations \cite{BL19,SW18,Sch10}.

The aim of this paper is to obtain a complete classification of $\SL$
contravariant vector valuations on polytopes without any additional assumptions.

A function $ Z :\poly\to\R^n$ is called vector valuations if the addition in \eqref{eqn:val} is the vector addition.
It is called \emph{$\SL$ contravariant} if $ Z (\f P)=\f^{-t} Z (P)$ for all $P\in\poly$ and $\f\in\SL$,
and is called \emph{$\SL$ covariant} if $ Z (\f P)=\f Z (P)$ for all $P\in\poly$ and $\f\in\SL$.
If $Z$ is either $\SL$ contravariant or $\SL$ covariant, then $Z$ is \emph{$\SL$ intertwining}.
In 2002, Ludwig \cite{Lud02c} established the first classification
of measurable, $\SL$ intertwining vector valuations on $\mathcal{P}^n_{(o)}$ with some assumptions of homogeneity,
where $\mathcal{P}^n_{(o)}$ is the space of polytopes in $\R^n$ that contain the origin in their interiors.
Later, Haberl and Parapatits \cite{HP16} removed the homogeneity assumption in Ludwig's result.
Recently, Zeng and the second author \cite{ZM18} obtained a complete classification
of $\SL$ covariant vector valuations on $\poly$ without any additional assumptions.
There are also some interesting characterizations of matrix and tensor valuations
(see \cite{Lud03,HP17,Ma19,MW19,ABDK19}).
Surprisingly, classifications of $\SL$ contravariant vector valuations are still missing on any spaces and with any conditions.

An intuitive example of $\SL$ contravariant vector valuation
is the sum of all facet normals. However, the Minkowski relation
shows that it vanishes. More precisely, in the discrete case,
it means the following. For $P\in\poly$, we have
\[\sum_{u\in\np(P)}a_P(u)u=o,\]
where $\np(P)$ denotes the set of all outer unit normals of facets of $P$,
$a_P(u)$ denotes the ($n-1$)-dimensional volume of $F(P,u)$,
and $F(P,u)$ denotes the facet of $P$ with outer unit normal $u$ (see \cite{Sch14}).
Nevertheless, for polytopes containing the origin, taking the partial sum
over facets that do not contain the origin, we get a non-zero valuation.
For a solution of Cauchy's functional equation $\z:[0,\infty)\to\R$,
the facet vector $\fv_\z(P)$ of $P\in\poly$ is defined by
\[\fv_\z(P)=\sum_{u\in\np(P)\setminus\npo(P)}\frac{\z(V(P,u))}{|h_P(u)|}u,\]
where $\npo(P)$ denotes the set of outer unit normals of facets of $P$ that contain the origin in their affine hulls,
$V(P,u)$ denotes the volume of the cone $\conv{o,F(P,u)}$,
the convex hull of $F(P,u)$ and the origin,
and $h_P(u)=\max\set{\inp{x}{u}:x\in P}$ denotes the support function of $P$.
We use the notation $\fv$ coinciding with $(0,1)$-tensor in \cite{HP17}.
Also, it is related with $(0,1)$-Minkowsi tensor in \cite[\S 5.4.2]{Sch14}.

In this paper, we show that the facet vector
is essentially the unique $\SL$ contravariant vector valuation on $\polyo$
for $n\geq3$.

Let $\polyo$ be the space of polytopes in $\R^n$ that contain the origin,

\begin{thm}\label{thm:mainN0}
	Let $n\geq3$.
	A function $ Z :\polyo\to\R^n$ is an $\SL$ contravariant valuation
	if and only if there exists a solution of Cauchy's functional equation
	$\z:[0,\infty)\to\R$ such that
	\[ Z (P)=\fv_\z(P)\]
	for every $P\in\polyo$.
\end{thm}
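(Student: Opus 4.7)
The plan is to prove both directions of the characterization. For sufficiency, the valuation identity for $\fv_\z$ follows by analyzing how facets of $P\cup Q$ and $P\cap Q$ correspond to those of $P$ and $Q$ when $P\cup Q$ is convex: the common cut face between $P$ and $Q$ appears with opposite outer normals in the two pieces, and its contributions cancel against the matching contribution in $P\cap Q$. For $\SL$ contravariance, under $\f\in\SL$ the outer unit normals transform as $v=\f^{-t}u/\|\f^{-t}u\|$, the cone volumes $V(P,u)$ are invariant (since $\det\f=1$), and $h_{\f P}(v)=h_P(u)/\|\f^{-t}u\|$; the factors of $\|\f^{-t}u\|$ then cancel and one obtains $\fv_\z(\f P)=\f^{-t}\fv_\z(P)$.

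The content is the converse, which I would prove in five steps. Step~1: show $Z(P)=o$ for every $P\in\polyo$ with $\dim P<n$, by exhibiting a subgroup of $\SL$ stabilizing $P$ whose contragredient action fixes no nonzero vector in $\R^n$. For $P$ of codimension at least two this follows from shears in the transverse directions; for codimension-one $P$ one uses a shear argument to kill the tangential component and a determinant $-1$ reflection on $\aff P$ paired with a sign flip on the normal (yielding an element of $\SL$) to kill the normal component. The hypothesis $n\geq 3$ is essential here. Step~2: the subgroup of even permutations of $e_1,\ldots,e_n$ stabilizes every $\lambda T^n=[o,\lambda e_1,\ldots,\lambda e_n]$, and contravariance forces $Z(\lambda T^n)=c(\lambda)(e_1+\cdots+e_n)$ for some function $c$, proportional to the outer normal of the unique facet of $\lambda T^n$ not through $o$. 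Define $\z:[0,\infty)\to\R$ by $\z(\lambda^n/n!)=\lambda c(\lambda)$ and $\z(0)=0$.

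Step~3: fix $0<\alpha<1$, set $p=\alpha e_1+(1-\alpha)e_2$, and cut $\lambda T^n$ by the hyperplane through $o,p,e_3,\ldots,e_n$ into $\lambda T_\alpha=[o,\lambda e_1,\lambda p,\lambda e_3,\ldots,\lambda e_n]$ and $\lambda T'_\alpha=[o,\lambda p,\lambda e_2,\lambda e_3,\ldots,\lambda e_n]$, both in $\polyo$ with $o$ as vertex. Explicit $\SL$ elements carry scaled standard simplices onto $\lambda T_\alpha$ and $\lambda T'_\alpha$; combined with contravariance and the definition of $\z$, this gives $Z(\lambda T_\alpha)=\z(\lambda^n(1-\alpha)/n!)(e_1+\cdots+e_n)/\lambda$ and similarly for $\lambda T'_\alpha$. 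The valuation identity, together with Step~1 applied to the $(n-1)$-dimensional interface $[o,\lambda p,\lambda e_3,\ldots,\lambda e_n]$, yields $\z(\lambda^n(1-\alpha)/n!)+\z(\lambda^n\alpha/n!)=\z(\lambda^n/n!)$, and letting $\alpha\in(0,1)$ and $\lambda>0$ vary delivers Cauchy's equation $\z(s)+\z(t)=\z(s+t)$ on $[0,\infty)$. Step~4: extend $Z=\fv_\z$ to all simplices in $\polyo$, first to those with $o$ as a vertex ($\SL$-images of scaled standard simplices, for which contravariance and Step~3 give the formula) and then to simplices with $o$ in a proper face but not as a vertex, by cutting through $o$ into simplices of the previous type and using Step~1 on the lower-dimensional interfaces. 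Step~5: triangulate any $P\in\polyo$ into simplices each with $o$ as a vertex and induct using the valuation identity and Step~1.

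The main obstacle I anticipate is Step~1, which requires identifying the correct stabilizer subgroups for $P$ in every possible dimension and position through $o$, and verifying that the contragredient has no nonzero invariant vector. This is also where the hypothesis $n\geq 3$ is essential: in dimension two the stabilizer of a segment through $o$ is a one-parameter diagonal subgroup whose contragredient acts trivially on the segment's direction and fixes the normal direction, so $Z$ on segments need not vanish, and the case $n=2$ requires a separate analysis matching the $\SLe$ covariant classification.
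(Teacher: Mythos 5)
Your overall strategy matches the paper's: establish sufficiency by checking the valuation property and $\SL$ contravariance of $\fv_\z$ directly, then for necessity show $Z$ vanishes on all lower-dimensional simplices, pin down $Z(\lambda T^n)$ as a multiple of $e_1+\cdots+e_n$ via symmetry, derive Cauchy's equation by cutting $\lambda T^n$ along a hyperplane through $o$, and extend by triangulation. Two small but worth-noting differences. First, in Step~1 you propose a pure stabilizer argument (shears in the transverse and normal directions for codimension $\geq 2$, shears plus a swap-and-sign-flip reflection for codimension $1$). This does work and is actually more self-contained than the paper's Lemma~\ref{thm:nsimple}, which establishes the vanishing on $T^1,\ldots,T^{n-1}$ by combining sign-flip matrices with an induction through the triangulation relation \eqref{eqn:extk}: the paper obtains only $Z(T^1)=w_1e_1$ from its chosen stabilizer elements and needs the $k=2$ cut to conclude $w_1=0$, whereas your shear $\f e_2=e_2+e_1$ kills $w_1$ outright. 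You are right that $n\geq 3$ enters exactly where you say. Second, your sufficiency sketch for the valuation property of $\fv_\z$ is too coarse: it describes the cancellation of cut faces, which is the $h_P\neq h_Q$ part, but the crux of Lemma~\ref{thm:simpval} is the set of normals where $h_P=h_Q$, on which one must use that $a_P(u)$ is a valuation in $P$ together with the Cauchy property of $\z$ to balance the four terms; for a general solution $\z$ of Cauchy's equation this step is where the structure of $\z$ is genuinely needed and your description does not surface it. Beyond these two points your computations in Steps~2 and~3 (in particular $\phi^{-t}\id=\id$ for the cut maps and the normalization $\z(\lambda^n/n!)=\lambda c(\lambda)$) reproduce the paper's reduction to Cauchy's equation, and Steps~4--5 correspond to the paper's use of Lemma~\ref{lemuq}.
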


Using a relation with $\SLe$ covariant vector valuations,
we obtain the classification in the case of dimension two.
We also find that the vector $\vo{\z}$ defined in \cite{ZM18} turns out to
be a rotation of the facet vector in this case (see Section 3 for details).
\begin{thm}\label{thm:main20}
	A function $ Z :\polyeo\to\R^2$ is an $\SLe$ contravariant valuation
	if and only if there exist constants $c_1, c_2\in\R$
	and a solution of Cauchy's functional equation $\z:[0,\infty)\to\R$ such that
	\[ Z (P)=\fv_\z(P)+c_1\rho_{\frac{\pi}{2}}\mv(P)+c_2\rho_{\frac{\pi}{2}}\ve(P)\]
	for every $P\in\polyeo$, where
	$\rho_{\frac{\pi}{2}}$ is the counter-clockwise rotation in $\R^2$ of the angle $\frac{\pi}{2}$.
\end{thm}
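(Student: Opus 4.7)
The main idea is that in the plane, $\SLe$ contravariance and $\SLe$ covariance are interchangeable via the rotation $\rho_{\pi/2}$. A direct matrix computation shows that for every $\f\in\SLe$,
\[\f^{-t}=\rho_{\pi/2}\,\f\,\rho_{-\pi/2}.\]
Consequently, composing with $\rho_{-\pi/2}$ yields a bijection between $\SLe$ contravariant and $\SLe$ covariant vector valuations on $\polyeo$: if $Z\colon\polyeo\to\R^2$ is $\SLe$ contravariant, then $W:=\rho_{-\pi/2}\circ Z$ is $\SLe$ covariant, since
\[W(\f P)=\rho_{-\pi/2}\f^{-t}Z(P)=\rho_{-\pi/2}\rho_{\pi/2}\f\rho_{-\pi/2}Z(P)=\f\,W(P),\]
and the valuation property is preserved by the linear map $\rho_{-\pi/2}$.

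I would then invoke the classification of $\SLe$ covariant vector valuations on $\polyeo$ from \cite{ZM18}: there exist constants $c_1,c_2\in\R$ and a solution $\z$ of Cauchy's functional equation such that
\[W(P)=\vo\z(P)+c_1\mv(P)+c_2\ve(P).\]
Applying $\rho_{\pi/2}$ to both sides gives
\[Z(P)=\rho_{\pi/2}\vo\z(P)+c_1\rho_{\pi/2}\mv(P)+c_2\rho_{\pi/2}\ve(P),\]
reducing the proof to identifying $\rho_{\pi/2}\vo\z(P)$ with $\fv_\z(P)$.

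To carry out this last identification, I would compare the defining sums directly. In $\R^2$ the unit tangent and outer unit normal of an edge differ by a $\pi/2$ rotation (with compatible orientation), and for an edge $F(P,u)$ the quantities $V(P,u)$ and $|h_P(u)|$ entering the two definitions agree; a term-by-term matching of the summands in $\vo\z$ and $\fv_\z$ then yields the identity, as already announced in the introduction. The converse direction---that the right-hand side is an $\SLe$ contravariant valuation for every admissible $\z,c_1,c_2$---follows by reversing the same equivalence, using that $\vo\z,\mv,\ve$ are $\SLe$ covariant valuations established in \cite{ZM18}. The main obstacle is arranging the orientation and sign conventions so that $\rho_{\pi/2}\vo\z$ equals $\fv_\z$ exactly; once this identification is in place, the rest is a purely formal transport along the bijection induced by $\rho_{\pi/2}$.
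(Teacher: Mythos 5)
Your proposal follows the same route as the paper: convert contravariance to covariance via $\rho_{\pi/2}$ (the paper's Lemma~\ref{thm:cocontra}), invoke the $\SLe$ covariant classification from \cite{ZM18} (Theorem~\ref{thm:co2o}), and translate $\vo{\z}$ into $\fv_\z$ (Lemma~\ref{thm:hfv}). One small caveat: the exact identity is $\fv_{\bar\z}(P)=\tfrac12\rho_{\pi/2}\vo{\bar\z}(P)$ rather than a straight equality, so you need to replace $\bar\z$ by $\z=2\bar\z$ (harmless, since $\z$ ranges over all Cauchy solutions) to land exactly on the claimed representation.
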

Here, for $P\in\poly$, $\mv(P)$ is the \emph{moment vector} of $P$,
which is defined by $\mv(P)=\int_Pxdx$.
The notation also coincides with $(1,0)$-tensor in \cite{HP17} and is related with $(1,0)$-Minkowsi tensor in \cite[\S 5.4.2]{Sch14}.
The valuation $\ve:\polyeo\to\R^2$ is defined by $\ve(P)=v+w$
if $\dim P=2$ and $P$ has two edges $\conv{o,v}$ and $\conv{o,w}$,
or $\dim P=2$ and $P$ has an edge $\conv{v,w}$ that contains the origin in its relative interior;
$\ve(P)=2(v+w)$ if $\dim P=1$ and $P=\conv{v,w}$ contains the origin;
$\ve(P)=0$ otherwise.

Similar to the classification of convex body valuations by Schuster and Wannerer \cite{SW12},
we further extend these results to $\poly$.

\begin{thm}\label{thm:mainN}
	Let $n\geq3$.
	A function $ Z :\poly\to\R^n$ is an $\SL$ contravariant valuation
	if and only if there exist solutions of Cauchy's functional equation
	$\z_1,\z_2:[0,\infty)\to\R$ such that
	\begin{equation}\label{eqn:mainN}
		 Z (P)=\fv_{\z_1}(P)+\fv_{\z_2}(\conv{o,P})
	\end{equation}
	for every $P\in\poly$, where $\conv{o,P}$ is the convex hull of $P$ and the origin.
\end{thm}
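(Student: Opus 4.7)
The plan is to reduce Theorem~\ref{thm:mainN} to Theorem~\ref{thm:mainN0} by first identifying a Cauchy solution $\z$ with $Z|_{\polyo} = \fv_{\z}$, and then classifying the extension freedom from $\polyo$ to $\poly$, which turns out to be parameterised by a second Cauchy solution. This follows the spirit of the Schuster--Wannerer extension in \cite{SW12}.

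For the ``if'' direction, I would verify separately that both $P \mapsto \fv_{\z_1}(P)$ and $P \mapsto \fv_{\z_2}(\conv{o,P})$ are $\SL$ contravariant valuations on $\poly$. Contravariance of the second map reduces to that of $\fv_{\z_2}$ on $\polyo$ via the equivariance $\conv{o, \f P} = \f \conv{o, P}$ for $\f \in \SL$. The valuation identity for $\fv_{\z_1}$ on $\poly$ follows from the facet-by-facet argument used on $\polyo$, adapted to account for the sign in $|h_P(u)|$ when $o \notin P$. For $\fv_{\z_2}(\conv{o, \cdot})$, the operation $P \mapsto \conv{o, P}$ does not preserve intersections in general, but only facets of $\conv{o, P}$ whose affine hulls avoid $o$ contribute to $\fv_{\z_2}$; these correspond bijectively to facets of $P$ non-visible from $o$ (i.e., with $h_P(u) > 0$), which allows the identity to be checked facet by facet.

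For the ``only if'' direction, the restriction $Z|_{\polyo}$ is an $\SL$ contravariant valuation, so Theorem~\ref{thm:mainN0} gives a Cauchy solution $\z$ with $Z(P) = \fv_{\z}(P)$ on $\polyo$. For any split $\z = \z_1 + \z_2$, the candidate $\fv_{\z_1}(P) + \fv_{\z_2}(\conv{o,P})$ automatically matches $Z$ on $\polyo$; thus the task reduces to pinning down $\z_1$ from the values of $Z$ on polytopes with $o \notin P$. A direct computation shows that on such $P$ the candidate evaluates to a facet sum in which visible facets ($h_P(u) < 0$) carry weight $\z_1$ while non-visible facets ($h_P(u) > 0$) carry weight $\z$. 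To extract $\z_1$, I would normalise via $\SL$ contravariance to a one-parameter family of $n$-simplices $T_t$ with $o \notin T_t$ and exactly one visible facet, then apply the valuation equation to a hyperplane cut through $o$: one piece lies in $\polyo$ (where $Z$ is already known), while the other remains off-origin. Matching the two sides forces the visible-facet coefficient in $Z(T_t)$ to satisfy Cauchy's equation, identifying $\z_1$. Setting $\z_2 := \z - \z_1$, I would conclude by showing $Z - \fv_{\z_1} - \fv_{\z_2}(\conv{o,\cdot}) \equiv 0$ on $\poly$ via induction over combinatorial types, using simplicial decomposition and the valuation equation.

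The main obstacle is the extraction of $\z_1$: unlike on $\polyo$ there is no fixed point of the $\SL$-action inside the polytope, so Cauchy's equation must emerge from valuation cuts that bridge off-origin polytopes back to ones in $\polyo$. Verifying that the test simplices $T_t$ recover $\z_1$ on all of $[0, \infty)$, and that the resulting formula propagates from them to arbitrary $P \in \poly \setminus \polyo$, will be the bulk of the technical work.
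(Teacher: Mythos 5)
Your plan for the ``if'' direction is more roundabout than necessary: the worry that $P\mapsto\conv{o,P}$ fails to preserve intersections is unfounded in the valuation setting. Whenever $P, Q, P\cup Q\in\poly$, one in fact has both $\conv{o,P\cup Q}=\conv{o,P}\cup\conv{o,Q}$ and $\conv{o,P\cap Q}=\conv{o,P}\cap\conv{o,Q}$ (two convex bodies with convex union must meet along any segment joining them, which is what makes the intersection identity go through). The paper uses this directly: $P\mapsto\fv_{\z_2}(\conv{o,P})$ is a valuation simply because $\fv_{\z_2}$ already is one on $\poly$ and $\conv{o,\cdot}$ is compatible with $\cup$ and $\cap$, together with $\conv{o,\f P}=\f\conv{o,P}$. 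Your proposed facet-by-facet re-verification is not wrong in spirit, but it re-proves something that follows formally.

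The ``only if'' direction contains a genuine gap. You propose to extract $\z_1$ from a family of off-origin $n$-simplices $T_t$ by ``a hyperplane cut through $o$'' so that ``one piece lies in $\polyo$.'' But if $o\notin T_t$ then for any hyperplane $H$ through $o$ neither $T_t\cap H^+$ nor $T_t\cap H^-$ contains the origin, so both pieces lie outside $\polyo$; the valuation identity then relates unknown values to unknown values and does not pin anything down. As described, the extraction mechanism does not close. The paper's route is different: Lemma~\ref{lemuq2} reduces the problem to knowing $Z$ on $sT^k$ and $s\tilde T^k$ only. The values on $sT^k$ come from Theorem~\ref{thm:mainN0} applied to $Z|_{\polyo}$, giving $\eta_1$. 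The values on $s\tilde T^k$ come from a separate device: define $\tilde Z$ on $\spl$ (simplices with a vertex at $o$) by $\tilde Z(T)=Z(\tilde T)$ where $\tilde T$ is the facet of $T$ opposite the origin; this $\tilde Z$ is again an $\SL$ contravariant valuation, and Theorem~\ref{thm:mainN0} (observed to hold on $\spl$) yields a second Cauchy solution $\eta_2$ with $Z(s\tilde T^k)=\fv_{\eta_2}(sT^k)$. Setting $\z_1=\eta_1-\eta_2$ and $\z_2=\eta_2$ matches both families and Lemma~\ref{lemuq2} finishes. Your write-up lacks this auxiliary-valuation idea (or any working substitute), so the bridge from off-origin data back to the $\polyo$ classification is missing.
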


Again, the case of dimension two is different.

\begin{thm}\label{thm:main2}
	A function $ Z :\polye\to\R^2$ is an $\SLe$ contravariant valuation
	if and only if there exist constants $c_1,c_2,\tilde c_1,\tilde c_2\in\R$
	and solutions of Cauchy's functional equation $\z_1,\z_2:[0,\infty)\to\R$ such that
	\begin{align*}
Z (P)&=\fv_{\z_1}(P)+\fv_{\z_2}(\conv{o,P})+c_1\rho_{\frac{\pi}{2}}\mv(P)+\tilde c_1\rho_{\frac{\pi}{2}}\mv(\conv{o,P}) \\
	&\qquad +c_2\rho_{\frac{\pi}{2}}\ve(\conv{o,P})+\tilde c_2\rho_{\frac{\pi}{2}}\ve(\conv{o,v_1,\ldots,v_r})
\end{align*}
	for every polytope $P\in\polye$ with vertices $v_1,\ldots,v_r$ visible from the origin and labeled counter-clockwisely,
	where a vertex $v$ of $P$ is called \emph{visible} from the origin if $P\cap\rint\conv{o,v}=\varnothing$.
\end{thm}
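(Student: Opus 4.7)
The plan is to reduce Theorem \ref{thm:main2} to Theorem \ref{thm:main20} along the same lines by which Theorem \ref{thm:mainN} extends Theorem \ref{thm:mainN0}, while carrying along the additional $\SLe$-specific moment and edge contributions. The key technical tool is that the cone operation $P\mapsto\conv{o,P}$ satisfies $\conv{o,P\cup Q}=\conv{o,P}\cup\conv{o,Q}$ and $\conv{o,P\cap Q}=\conv{o,P}\cap\conv{o,Q}$ for every $P,Q,P\cup Q\in\polye$, and commutes with $\SLe$. Consequently, whenever $W$ is an $\SLe$ contravariant valuation on $\polyeo$, the composition $P\mapsto W(\conv{o,P})$ is an $\SLe$ contravariant valuation on $\polye$. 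Sufficiency then reduces to verifying that each summand on the right-hand side is a valuation and $\SLe$ contravariant: this is immediate for $\fv_{\z_1}(P)$, $\fv_{\z_2}(\conv{o,P})$, $\rho_{\frac{\pi}{2}}\mv(P)$, and $\rho_{\frac{\pi}{2}}\mv(\conv{o,P})$ from Theorem \ref{thm:main20}, while the two $\ve$-summands require only a short direct check from the case-by-case definition of $\ve$.

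For necessity, let $Z:\polye\to\R^2$ be an $\SLe$ contravariant valuation and restrict to $\polyeo$. Theorem \ref{thm:main20} provides a Cauchy solution $\z_0$ and constants $C_1,C_2\in\R$ with
\[
Z(P)=\fv_{\z_0}(P)+C_1\rho_{\frac{\pi}{2}}\mv(P)+C_2\rho_{\frac{\pi}{2}}\ve(P)\qquad(P\in\polyeo).
\]
It remains to determine $Z(P)$ for polytopes $P$ with $o\notin P$ and to identify the six parameters of the theorem. For such a $P$, label its vertices visible from $o$ as $v_1,\ldots,v_r$ counter-clockwise; then one has the geometric decomposition
\[
\conv{o,P}=P\cup\conv{o,v_1,\ldots,v_r},\qquad P\cap\conv{o,v_1,\ldots,v_r}=\conv{v_1,\ldots,v_r},
\]
whose first and third pieces lie entirely in $\polyeo$. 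Applying the valuation identity to this decomposition expresses $Z(P)$ in terms of values determined by the $\polyeo$-formula above and the value $Z(\conv{v_1,\ldots,v_r})$, a polytope of strictly smaller visibility complexity than $P$. An induction on $r$ reduces the problem to the base cases of segments $\conv{v,w}$ with $o\notin\conv{v,w}$ (either collinear with or skew to $o$) and single points $\{v\}\neq\{o\}$.

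The base cases are handled directly by $\SLe$-contravariance: for a single point $\{v\}$, choosing $\phi\in\SLe$ that stretches the direction of $v$ and combining with the valuation identity on a subdivided triangle $\conv{o,v,\phi v}$ yields a Cauchy-type equation whose general solution is exactly $\fv_{\z_2}(\conv{o,\{v\}})+\tilde c_1\rho_{\frac{\pi}{2}}\mv(\conv{o,\{v\}})+\tilde c_2\rho_{\frac{\pi}{2}}\ve(\conv{o,v})$ for some $\z_2,\tilde c_1,\tilde c_2$; the segment case is analogous. Matching these contributions against the $\polyeo$-formula from the previous paragraph forces the splittings $\z_1+\z_2=\z_0$, $c_1+\tilde c_1=C_1$, $c_2+\tilde c_2=C_2$ and completes the classification.

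The main obstacle will be the bookkeeping in degenerate configurations: when $o$ lies on the affine hull of an edge of $P$, when some visible vertices are collinear with $o$, or when $\conv{v_1,\ldots,v_r}$ drops dimension. It is precisely these degeneracies that force the splitting of the single $\ve$-summand of Theorem \ref{thm:main20} into the two separate summands $\rho_{\frac{\pi}{2}}\ve(\conv{o,P})$ and $\rho_{\frac{\pi}{2}}\ve(\conv{o,v_1,\ldots,v_r})$ in the present theorem, and verifying that the same $\z_2,\tilde c_1,\tilde c_2$ work consistently across every such limiting configuration will be the most delicate part of the argument.
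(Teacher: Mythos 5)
Your plan diverges substantially from the paper's actual proof, which is much shorter: the paper applies Lemma~\ref{thm:cocontra} to pass to the covariant function $\rho_{\frac{\pi}{2}}^{-1}Z$, invokes the already-known classification of $\SLe$ covariant valuations on $\polye$ (Theorem~\ref{thm:co2}, from~\cite{ZM18}), rotates back, converts $\vo{\bar\z}$ into $\fv_{2\bar\z}$ via Lemma~\ref{thm:hfv}, and finally uses the simplicity of $\fv_\z$ (Lemma~\ref{thm:simpval}) to replace the telescoping sum $\sum_{i=2}^r\fv_{2\bar\z_2}(\conv{o,v_{i-1},v_i})$ with $\fv_{2\bar\z_2}(\conv{o,P})-\fv_{2\bar\z_2}(P)$. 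No geometric decomposition of $\polye$ and no induction are needed. Your route tries instead to extend Theorem~\ref{thm:main20} from $\polyeo$ to $\polye$ by a direct geometric argument, mimicking the extension strategy of Theorem~\ref{thm:mainN}; but note that even for Theorem~\ref{thm:mainN} the paper does not induct on a geometric decomposition, rather it uses Lemma~\ref{lemuq2} together with the auxiliary valuation $\tilde Z(T)=Z(\tilde T)$ on $\spl$.

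More importantly, there is a genuine gap in your necessity argument. You apply the valuation identity to the decomposition
\[
\conv{o,P}=P\cup\conv{o,v_1,\ldots,v_r},\qquad P\cap\conv{o,v_1,\ldots,v_r}=\conv{v_1,\ldots,v_r},
\]
to express $Z(P)$ in terms of $Z(\conv{o,P})$, $Z(\conv{o,v_1,\ldots,v_r})$ (both in $\polyeo$, hence known) and $Z(\conv{v_1,\ldots,v_r})$, and then claim that $\conv{v_1,\ldots,v_r}$ has ``strictly smaller visibility complexity'' so an induction on $r$ terminates. This is false in general: the visible chain $v_1,\ldots,v_r$ of $P$ is convex as seen from $o$, so $\conv{v_1,\ldots,v_r}$ is a polygon whose boundary facing $o$ is exactly that same chain, and its visible vertices from $o$ are again $v_1,\ldots,v_r$. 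Applying your decomposition to $\conv{v_1,\ldots,v_r}$ returns the same polytope, so the recursion is stationary and the induction does not bottom out at segments or points. You would need a different inductive measure (for example, the number of \emph{invisible} vertices, which does strictly decrease), or, better, replace the whole argument by the paper's duality route via Lemma~\ref{thm:cocontra} and Theorem~\ref{thm:co2}, which avoids these geometric complications entirely.
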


It should be remarked that vector valuations are special Minkowski valuations \cite{Hab12b},
since vectors can be viewed as convex bodies and the vector addition coincides with the Minkowski addition.
Also, vectors can be viewed as linear functions on $\R^n$.
Hence vector valuations are also embedded in the space of continuous-function valued valuations \cite{Li18}.
However, classifications of valuations in \cite{Hab12b,Li18} both need some assumptions of regularity.
But as we have seen, it is not a problem for vector valuations.


\section{Notation and preliminaries}

We work in $n$-dimensional Euclidean space $\R^n$
with the standard basis $\set{e_1,\ldots,e_n}$.
We write a vector $x\in\R^n$ in coordinates by $x=(x_1,\ldots,x_n)^t$.
The inner product of $x,y\in\R^n$ is denoted by $\inp{x}{y}$.
Denote the vector with all coordinates 1 by $\id$,
the $n\times n$ identity matrix by $I_n=(e_1,\ldots,e_n)$
and the determinant of a matrix $A$ by $\det A$.
The affine hull, the boundary, the dimension, the interior and the relative interior
of a given set in $\R^n$ are denoted by aff, bd, dim, int and relint, respectively.

Denote by $\conv{v_1,\ldots,v_k}$ the convex hull of $v_1,\ldots,v_k\in\R^n$.
A polytope is the convex hull of finitely many points in $\R^n$.
Two basic classes of polytopes are the $k$-dimensional standard simplex
$T^k=\conv{o,e_1,\ldots,e_k}$ and one of their ($k-1$)-dimensional facets
$\tilde T^k=\conv{e_1,\ldots,e_k}$.
For $i=1,\ldots,n$, let $\mathcal T^i$ denote the set of $i$-dimensional simplices
with one vertex at the origin, and $\tilde{\mathcal T}^i$ denote
the set of $(i-1)$-dimensional simplices $T\subset\R^n$ with $o\notin\aff T$.
Indeed, every polytope can be triangulated into simplices.
We define a \emph{triangulation} of a $k$-dimensional polytope $P$ into simplices
as a set of $k$-dimensional simplices $\set{T_1, \ldots, T_r}$
which have pairwise disjoint interiors, with $P=\cup T_i$
and with the property that for arbitrary $1\leq i_1<\cdots <i_j\leq r$ the intersections
$T_{i_1}\cap\cdots\cap T_{i_j}$ are again simplices.

We refer to \cite[Chap. 7]{Gru07}, \cite{KR97} and \cite[Chap. 6]{Sch14}
for classical backgroud on valuations. Let $\mathcal{Q}^n$ be either $\polyo$ or $\poly$.
First, we have the inclusion-exclusion principle (see \cite{KR97}).

\begin{lem}
	Let $ Z :\mathcal{Q}^n\to\R^n$ be a valuation. Then
	\[ Z (P_1\cup\cdots\cup P_k)=\sum_{\varnothing\neq S\subseteq\set{1, 2,\ldots, k}}(-1)^{|S|-1} Z (\bigcap\limits_{i\in S}P_i)\]
	for all $k\in \mathbb N$ and $P_1, P_2, \ldots, P_k\in\mathcal{Q}^n$ with $P_1\cup\cdots\cup P_k\in\mathcal{Q}^n$.
\end{lem}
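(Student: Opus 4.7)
The plan is to proceed by induction on $k$. The case $k=1$ is trivial, and $k=2$ is precisely the defining valuation identity \eqref{eqn:val} rearranged, so the base is immediate.

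For the inductive step, I would fix $P_1,\ldots,P_k\in\mathcal{Q}^n$ with $P_1\cup\cdots\cup P_k\in\mathcal{Q}^n$, set $U=P_1\cup\cdots\cup P_{k-1}$, and apply \eqref{eqn:val} to the pair $U$ and $P_k$ to obtain
\[Z(P_1\cup\cdots\cup P_k)=Z(U)+Z(P_k)-Z(U\cap P_k).\]
By distributivity $U\cap P_k=\bigcup_{i=1}^{k-1}(P_i\cap P_k)$, and finite intersections of polytopes are again polytopes (belonging to $\polyo$ whenever the original polytopes do, since the origin is preserved under intersection). I would then apply the induction hypothesis once to $Z(U)$, producing a sum over nonempty $T\subseteq\{1,\ldots,k-1\}$, and once to $Z\bigl(\bigcup_{i=1}^{k-1}(P_i\cap P_k)\bigr)$, using the associativity identity $\bigcap_{i\in T}(P_i\cap P_k)=\bigcap_{i\in T\cup\{k\}}P_i$. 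Reindexing the combined expression by nonempty $S\subseteq\{1,\ldots,k\}$: subsets with $k\notin S$ come from $Z(U)$ with sign $(-1)^{|S|-1}$; subsets containing $k$ with $|S|\geq 2$ arise from $-Z(U\cap P_k)$, the external minus flipping $(-1)^{|T|-1}$ into $(-1)^{|S|-1}$ where $|S|=|T|+1$; and the singleton $S=\{k\}$ matches $+Z(P_k)$. Each nonempty $S$ is accounted for exactly once with the correct sign, giving the claimed identity.

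The main obstacle is that the partial union $U=P_1\cup\cdots\cup P_{k-1}$ need not itself be a polytope even though $P_1\cup\cdots\cup P_k$ is one, so strictly speaking $Z(U)$ and $Z(U\cap P_k)$ are not defined \emph{a priori} on $\mathcal{Q}^n$. This is the standard technical point addressed by a Groemer-type extension (see \cite{KR97}): every valuation on $\mathcal{Q}^n$ extends uniquely to a function on the lattice of finite unions of members of $\mathcal{Q}^n$ that continues to satisfy \eqref{eqn:val}, after which the inductive computation goes through verbatim. Equivalently, one may pass to the level of indicator functions, where the pointwise identity $\mathbf{1}_{P_1\cup\cdots\cup P_k}=\sum_{\varnothing\neq S\subseteq\{1,\ldots,k\}}(-1)^{|S|-1}\mathbf{1}_{\bigcap_{i\in S}P_i}$ is elementary and the conclusion follows by linearity of the extended $Z$ on the algebra generated by polytope indicators.
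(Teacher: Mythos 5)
The paper states this lemma with only a citation to Klain--Rota \cite{KR97} and does not prove it, so there is no in-paper proof to compare against. Your argument is the standard one that underlies the result in that reference, and it is correct in outline. In particular, you have put your finger on exactly the right obstacle: the partial union $U=P_1\cup\cdots\cup P_{k-1}$ need not lie in $\mathcal{Q}^n$, so the naive induction does not typecheck, and the fix is Groemer's extension theorem (equivalently, the fact that a valuation on convex polytopes induces a well-defined linear functional on the linear span of their indicator functions, on which the pointwise identity $\mathbf{1}_{P_1\cup\cdots\cup P_k}=\sum_{\varnothing\neq S}(-1)^{|S|-1}\mathbf{1}_{\cap_{i\in S}P_i}$ can then be applied). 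One small point worth being explicit about: the standard statement of Groemer/Volland extension is for valuations on all of $\poly$; for $\mathcal{Q}^n=\polyo$ one should note that the same extension argument goes through because $\polyo$ is an intersectional subfamily closed under the intersections and triangulations used in the proof (every polytope containing the origin can be triangulated into simplices each containing the origin), so the induced functional on the span of $\{\mathbf{1}_P:P\in\polyo\}$ is still well defined. With that caveat addressed, your inductive bookkeeping of the signs is correct and the proof is complete.
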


We can use triangulations and the inclusion-exclusion principle to get the following result (see e.g., \cite[Lemma 4.5 and Lemma 4.6]{Li18}).
	
\begin{lem}\label{lemuq}
	Let $Z$ and $Z'$ be $\SL$ contravariant vector valuations on $\polyo$. If $Z (sT^d) = Z' (sT^d)$ for every $s>0$ and $0 \leq d \leq n$, then $Z P = Z' P$ for every $P \in \polyo$.
\end{lem}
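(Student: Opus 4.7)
The plan is to set $W = Z - Z'$, which is again an $\SL$ contravariant valuation on $\polyo$ satisfying $W(sT^d)=0$ for every $s>0$ and $0\leq d\leq n$. I would then propagate this vanishing in two stages: first to every simplex with one vertex at the origin, and then to every polytope in $\polyo$ via triangulation and the inclusion-exclusion principle.

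For the first stage, take an arbitrary $d$-simplex $T=\conv{o,v_1,\ldots,v_d}$ in $\R^n$ with $v_1,\ldots,v_d$ linearly independent. I would produce $\phi\in\SL$ and $s>0$ with $\phi(se_i)=v_i$ for $i=1,\ldots,d$. When $d<n$, the last $n-d$ columns of $\phi$ remain free, so arranging $\det\phi=1$ is routine. When $d=n$, setting $s=|\det(v_1,\ldots,v_n)|^{1/n}$ forces $|\det\phi|=1$, and if the sign is wrong one simply swaps two of the $v_i$, which only relabels the vertices of $T$. Then $T=\phi(sT^d)$, so the $\SL$ contravariance of $W$ yields $W(T)=\phi^{-t}W(sT^d)=0$.

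For the second stage, given $P\in\polyo$ with $k=\dim P$, I would decompose $P$ into $k$-simplices each having $o$ as a vertex. Since $o\in P\subseteq\aff P$, a standard ``star-of-the-origin'' construction works: for every facet $F$ of $P$ with $o\notin F$, the pyramid $\conv{o,F}$ is $k$-dimensional, and by triangulating these facets compatibly (inductively on dimension) and coning from $o$ one obtains a triangulation $\set{T_1,\ldots,T_r}$ of $P$ in the sense of the paper with $o$ a vertex of every $T_i$. Because $o$ is an extreme point of each $T_i$, any common face $T_{i_1}\cap\cdots\cap T_{i_j}$ --- which necessarily contains $o$ --- must have $o$ as one of its own vertices, hence is again a simplex with $o$ as a vertex and lies in $\polyo$. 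Applying the inclusion-exclusion principle to $P=T_1\cup\cdots\cup T_r$ then writes $W(P)$ as a signed sum of values $W(T_{i_1}\cap\cdots\cap T_{i_j})$, each of which vanishes by the first stage. Thus $W(P)=0$, giving $Z(P)=Z'(P)$.

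The main obstacle is building the triangulation cleanly enough to feed into the inclusion-exclusion formula: one must ensure not only that every simplex in the triangulation has $o$ as a vertex, but also that each multi-intersection remains a simplex in $\polyo$. Once this combinatorial point is handled, $\SL$ contravariance reduces everything to the assumed vanishing on $sT^d$ and the conclusion follows.
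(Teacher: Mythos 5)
Your argument is correct and follows the same strategy the paper alludes to in the sentence preceding the lemma (and in the cited Lemmas 4.5--4.6 of \cite{Li18}): pass to $W = Z - Z'$, reduce to $sT^d$ via $\SL$ contravariance, and propagate through a star triangulation with the inclusion-exclusion principle. The two small points you flag are indeed the only places that need care, and you handle them correctly: (i) for $d=n$ the scaling $s=|\det(v_1,\ldots,v_n)|^{1/n}$ together with a possible transposition of two vertices makes $\det\phi = 1$, and for $d<n$ the free columns absorb the determinant; (ii) since $o\in P$, every facet $F$ of $P$ with $o\notin F$ has $o\notin\aff F$ (a supporting hyperplane containing $o$ and a point of $P$ would force $o$ into the corresponding face), so $\conv{o,F}$ is full-dimensional, the cones over a compatible boundary triangulation form a triangulation of $P$ in the paper's sense, and every multi-intersection is a face of some $T_i$ containing the vertex $o$, hence a simplex in $\polyo$. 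This is exactly what is needed to apply the inclusion-exclusion principle within $\polyo$, so $W(P)=0$.
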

	
\begin{lem}\label{lemuq2}
	Let $Z$ and $Z'$ be $\SL$ contravariant vector valuations  on $\poly$.
	If $Z (sT^d) = Z' (sT^d)$ and $Z(s\tilde T^d) = Z'(s\tilde T^d)$
	for every $s>0$ and $0 \leq d \leq n$, then $Z P = Z' P$ for every $P \in \poly$.
\end{lem}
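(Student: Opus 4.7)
Set $W = Z - Z'$; this is an $\SL$ contravariant vector valuation on $\poly$ vanishing on $sT^d$ and $s\tilde T^d$ for every $s>0$ and $0\le d\le n$, and the goal is to show $W\equiv 0$. First, I would exploit $\SL$ contravariance to spread the vanishing over the full $\SL$ orbit of each of these simplices; a direct orbit computation shows that these orbits cover three classes: (i) the singleton $\{o\}$; (ii) every $d$-dimensional simplex with $o$ as a vertex, for $1\le d\le n$; and (iii) every $(d-1)$-dimensional simplex whose affine hull does not contain $o$, for $1\le d\le n$. In particular, $W$ will vanish on every singleton.

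From there the plan is to prove $WP = 0$ for every $P\in\poly$ by induction on $\dim P$. The base $\dim P = 0$ is immediate from (i) and (iii). For the inductive step I would assume $W$ vanishes on polytopes of dimension less than $k$, fix $P$ of dimension $k$, and split into three cases according to the position of $o$. If $o\in P$ (Case A), I triangulate $P$ into $k$-simplices with $o$ as a common vertex; each such simplex lies in the orbit (ii) and every intersection entering the inclusion-exclusion expansion has dimension strictly less than $k$, so $WP = 0$. If $o\notin\aff P$ (Case B), the analogous argument goes through using a triangulation of $P$ into $k$-simplices lying in $\aff P$, each of which automatically misses $o$ in its affine hull and thus falls under (iii).

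The interesting case is Case C, where $o\in\aff P\setminus P$. Here I would first reduce to a single simplex by triangulating $P$ into $k$-simplices inside $\aff P$ and applying inclusion-exclusion, leaving the task of handling a $k$-simplex $T$ with $o\in\aff T\setminus T$. For such $T$, the polytope $\conv{o,T}$ is $k$-dimensional and contains $o$, so Case A \emph{applied within the same step of the dimensional induction} gives $W(\conv{o,T}) = 0$. Next I would decompose $\conv{o,T}$ as the union of $T$ together with the cones $\conv{o,F}$ taken over those facets $F$ of $T$ that are visible from $o$ in $\aff T$; each such cone is a $k$-simplex with $o$ as a vertex (hence falls under (ii)), while every intersection among $T$ and these cones is a lower-dimensional polytope on which $W$ vanishes by the inductive hypothesis. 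A final application of inclusion-exclusion then leaves $WT$ as the sole surviving term and forces $WT = 0$.

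The hard part is Case C, specifically verifying that the cone decomposition of $\conv{o,T}$ really forms a valid inclusion-exclusion setup: one needs to check both that $T$ together with the cones $\conv{o,F}$ covers $\conv{o,T}$, and that every intersection appearing in the expansion is again a polytope of a type already handled by (ii), (iii), or the inductive hypothesis. Once this geometric bookkeeping is in place, the remainder of the argument is formal.
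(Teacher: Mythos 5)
Your proof is correct and follows exactly the strategy the paper indicates (the paper gives no details and simply cites \cite{Li18} together with the remark that ``triangulations and the inclusion-exclusion principle'' yield the result): set $W=Z-Z'$, use $\SL$-contravariance to propagate the vanishing of $W$ over the $\SL$-orbits of $sT^d$ and $s\tilde T^d$, and then run an induction on dimension via triangulation and inclusion-exclusion, with the cone decomposition $\conv{o,T}=T\cup\bigcup_F\conv{o,F}$ over visible facets handling the case $o\in\aff P\setminus P$. The geometric bookkeeping you flag in Case C does go through: visibility of $F_i$ forces $T\cap\conv{o,F_i}=F_i$ and $\conv{o,F_i}\cap\conv{o,F_j}=\conv{o,F_i\cap F_j}$, so every intersection in the inclusion-exclusion expansion is a polytope of dimension $<k$, as required.
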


A valuation on $\mathcal{Q}^n$ is called {\em simple} if it vanishes
on every lower dimensional $P\in\mathcal{Q}^n$.

Next, we mention a series of triangulations that will be used several times in this paper.
Let $\l\in(0,1)$ and denote by $H$ the hyperplane through the origin with
the normal vector $(1-\l)e_1-\l e_2$. Write
\[H^+=\{x\in\R^n:\inp{x}{((1-\l)e_1-\l e_2)}\geq 0\}\text{ and }
H^-=\{x\in\R^n:\inp{x}{((1-\l)e_1-\l e_2)}\leq 0\}.\]
Clearly, $H^+$ and $H^-$ are the two halfspaces bounded by $H$.
This hyperplane induces the series of triangulations of $T^i$ as well as $\tilde T^i$
for $i=2,\ldots,n$. There are two representations corresponding to these triangulations
due to the following definitions.

Let $\hat{T}^{k-1}=[o,e_1,e_3,\dots,e_k]$ for $2 \leq k \leq n$.
\begin{defn}\label{defn:fp1}
	For $\l\in(0,1)$, define the linear transform $\f_1\in\SL$ by
	\[\f_1e_1=\l e_1+(1-\l)e_2,\;\f_1e_2=e_2,\;\f_1e_n=e_n/\l,\;\f_1e_j=e_j,\;\text{ where }j\neq 1,2,n,\]
	and $\p_1\in\SL$ by
	\[\p_1e_1=e_1,\;\p_1e_2=\l e_1+(1-\l)e_2,\;\p_1e_n=e_n/(1-\l),\;\p_1e_j=e_j,\;\text{ where }j\neq 1,2,n.\]
\end{defn}
Let $i<n$.
It is clear that $T^i\cap H^+=\p_1T^i$, $T^i\cap H^-=\f_1T^i$ and $T^i\cap H=\f_1\hat{T}^{i-1}$.
Let $ Z :\polyo\to\R^n$ be an $\SL$ contravariant valuation.
By the inclusion-exclusion principle, we have
\[ Z (T^i)+ Z (T^i\cap H)= Z (T^i\cap H^+)+ Z (T^i\cap H^-).\]
Thus,
\[ Z (T^i)+ Z (\f_1\hat{T}^{i-1})= Z (\f_1T^i)+ Z (\p_1T^i).\]
Since $ Z $ is $\SL$ contravariant, we derive
\begin{equation}\label{eqn:extk}
	\left(\f_1^{-t}+\p_1^{-t}-I_n\right) Z (T^i)=\f_1^{-t} Z (\hat{T}^{i-1}).
\end{equation}

\begin{defn}\label{defn:fp2}
	For $\l\in(0,1)$, define the linear transform $\f_2\in\GL$ by
	\[\f_2e_1=\l e_1+(1-\l)e_2,\quad\f_2e_2=e_2,\quad\f_2e_j=e_j,\quad\text{ where }j=3,\ldots,n,\]
	and $\p_2\in\GL$ by
	\[\p_2e_1=e_1,\quad\p_2e_2=\l e_1+(1-\l)e_2,\quad\p_2e_j=e_j,\quad\text{ where }j=3,\ldots,n.\]
\end{defn}
Now, we consider $sT^n$ for $s>0$.
It is clear that $sT^n\cap H^+=\p_2sT^n$, $sT^n\cap H^-=\f_2sT^n$ and $sT^n\cap H=\f_2s\hat{T}^{n-1}$.
Let $ Z :\polyo\to\R^n$ be an $\SL$ contravariant valuation.
Again, by the inclusion-exclusion principle, we have
\[ Z (sT^n)+ Z (sT^n\cap H)= Z (sT^n\cap H^+)+ Z (sT^n\cap H^-).\]
Thus,
\[ Z (sT^n)+ Z (\f_2s\hat{T}^{n-1})= Z (\f_2sT^n)+ Z (\p_2sT^n).\]
Since $\f_2/\l^{\frac{1}{n}}$ and $\p_2/(1-\l)^{\frac{1}{n}}$ belong to $\SL$, we obtain
\[ Z (sT^n)+\l^{\frac{1}{n}}\f_2^{-t} Z (\l^{\frac{1}{n}}s\hat{T}^{n-1})=
\l^{\frac{1}{n}}\f_2^{-t} Z (\l^{\frac{1}{n}}sT^n)+(1-\l)^{\frac{1}{n}}\p_2^{-t} Z ((1-\l)^{\frac{1}{n}}sT^n).\]
Replacing $s$ by $s^{\frac{1}{n}}$ in the equation above yields
\begin{equation}\label{eqn:extn}
\begin{split}
	& Z (s^{\frac{1}{n}}T^n)+\l^{\frac{1}{n}}\f_2^{-t} Z ((\l s)^{\frac{1}{n}}\hat{T}^{n-1})\\
	=&\l^{\frac{1}{n}}\f_2^{-t} Z ((\l s)^{\frac{1}{n}}T^n)+(1-\l)^{\frac{1}{n}}\p_2^{-t} Z (((1-\l)s)^{\frac{1}{n}}T^n).
\end{split}
\end{equation}


\section{The facet vector}

First, we show that the facet vector is a simple valuation on $\poly$.

\begin{lem}\label{thm:simpval}
	Let $\z:[0,\infty)\to\R$ be a solution of Cauchy's functional equation. Then,
	the facet vector $\fv_\z:\poly\to\R^n$ is a simple valuation.
\end{lem}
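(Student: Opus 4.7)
The plan is to verify simplicity and the valuation identity separately, reducing the latter to a per-direction analysis.

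Simplicity: if $\dim P\leq n-2$ then $\np(P)=\varnothing$, so $\fv_\z(P)=0$ trivially. If $\dim P=n-1$, let $u_0$ be a unit normal to $\aff P$, so $\np(P)=\{\pm u_0\}$ with $F(P,\pm u_0)=P$; then $V(P,u_0)=V(P,-u_0)$ and $|h_P(u_0)|=|h_P(-u_0)|$, and either $o\in\aff P$ (both normals lie in $\npo(P)$ and contribute nothing) or $o\notin\aff P$ (the two terms are opposite in sign and cancel). Either way, $\fv_\z(P)=0$.

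For the valuation identity, fix $P,Q\in\poly$ with $R:=P\cup Q\in\poly$ and set $S:=P\cap Q$. Writing the definition of $\fv_\z$ as a finite sum over directions, it suffices to show the per-direction identity
\[F_\z(P,u)+F_\z(Q,u)=F_\z(R,u)+F_\z(S,u)\qquad\text{for every }u\in S^{n-1},\]
where $F_\z(P,u):=\tfrac{\z(V(P,u))}{|h_P(u)|}u$ if $u\in\np(P)\setminus\npo(P)$ and $0$ otherwise. I would split on support values. In Case A, $h_P(u)=h_Q(u)=h$: the faces $F(P,u),F(Q,u)$ both lie in $\{\inp{x}{u}=h\}$, their union is the convex face $F(R,u)$, hence $F(P,u)\cap F(Q,u)\neq\varnothing$, and therefore $h_S(u)=h$ with $F(S,u)=F(P,u)\cap F(Q,u)$. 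Inclusion-exclusion for $\mathcal{H}^{n-1}$ on this hyperplane gives $a_P(u)+a_Q(u)=a_R(u)+a_S(u)$, multiplying by $|h|/n$ gives the analogous identity for $V(\cdot,u)$, and Cauchy's equation transfers this to $\z(V(P,u))+\z(V(Q,u))=\z(V(R,u))+\z(V(S,u))$; dividing by $|h|\neq0$ and multiplying by $u$ yields the per-direction identity. If $h=0$, all facets in direction $u$ lie in a hyperplane through $o$, so $u\in\npo$ for each of the four polytopes and every term vanishes.

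In Case B, WLOG $h_P(u)>h_Q(u)$, so only $P$ realizes the maximum of $\inp{\cdot}{u}$ over $R$; thus $F(R,u)=F(P,u)$ and $F_\z(R,u)=F_\z(P,u)$, reducing the identity to $F_\z(Q,u)=F_\z(S,u)$. The crucial observation is $F(Q,u)\subseteq P$: for any $x\in F(Q,u)$ and $p\in F(P,u)$, the segment $\{(1-t)x+tp:0<t\leq1\}$ has $u$-coordinate exceeding $h_Q(u)$, hence is disjoint from $Q$; being in the convex set $R$, it lies in $P$; closedness of $P$ and $t\to0^+$ give $x\in P$. Consequently $F(S,u)=F(Q,u)$ with matching support and cone volume, and $F_\z(S,u)=F_\z(Q,u)$. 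Case B is the main obstacle—asymmetric supports might appear to require a second use of Cauchy's equation—but the segment argument, relying only on convexity of $R$ and closedness of $P$, resolves it by a pointwise identification of facets.
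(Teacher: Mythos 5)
Your proof is correct and follows essentially the same route as the paper's: partition $S^{n-1}$ by the sign of $h_P(u)-h_Q(u)$, use the valuation property of $u\mapsto a_{(\cdot)}(u)$ together with Cauchy's equation on the equal-support set, and identify facets of $P\cup Q$ and $P\cap Q$ with those of $P$ or $Q$ when the supports differ. The segment argument you give for $F(Q,u)\subseteq P$ is a welcome justification of a step the paper merely states.
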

\begin{proof}
	In order to prove that $\fv_\z$ is a valuation, we need to show that
	\begin{equation}\label{eqn:3.1}
		\fv_\z(P\cup Q)+\fv_\z(P\cap Q)=\fv_\z(P)+\fv_\z(Q)
	\end{equation}
	for all $P,Q\in\poly$ with $P\cup Q\in\poly$. We distinguish three sets of unit vectors:
	
	\[I_1:=\{u\in S^{n-1}: h_P(u)<h_Q(u)\},\]
	\[I_2:=\{u\in S^{n-1}: h_P(u)=h_Q(u)\},\]
	\[I_3:=\{u\in S^{n-1}: h_P(u)>h_Q(u)\}.\]
	Note that the sets $I_1, I_3$ are both open and that $h_{P\cup Q}=\max\{h_{P}, h_{Q}\}$ and $h_{P\cap Q}=\min\{h_{P}, h_{Q}\}$
	if $P\cup Q$ is convex. Recall that $a_P(u)$ is the ($n-1$)-dimensional volume of $F(P,u)$. Then,
	\[V(P,u)=\frac{1}{n}a_P(u)h_P(u).\]
	For $u\in I_1$, we have
	\[a_{P\cup Q}(u)=a_{Q}(u),\,h_{P\cup Q}(u)=h_{Q}(u),\,a_{P\cap Q}(u)=a_{P}(u),\,h_{P\cap Q}(u)=h_{P}(u).\]
	Thus,
	\[V(P\cup Q,u)=V(Q,u)\text{ and }V(P\cap Q)=V(P,u),\quad\text{for }u\in I_1.\]
	Analogous for $I_3$.
	Note that
	
	\[(\np(P\cup Q)\setminus\npo(P\cup Q))\cap I_1=(\np(Q)\setminus\npo(Q))\cap I_1,\]
	\[(\np(P\cap Q)\setminus\npo(P\cap Q))\cap I_1=(\np(P)\setminus\npo(P))\cap I_1,\]
	\[(\np(P\cup Q)\setminus\npo(P\cup Q))\cap I_3=(\np(P)\setminus\npo(P))\cap I_3,\]
	\[(\np(P\cap Q)\setminus\npo(P\cap Q))\cap I_3=(\np(Q)\setminus\npo(Q))\cap I_3.\]
	Therefore, we have
	\begin{align*}
		&\displaystyle\sum_{u\in(\np(P\cup Q)\setminus\npo(P\cup Q))\cap I_1}\frac{\z(V(P\cup Q,u))}{h_{P\cup Q}(u)}u
		+\displaystyle\sum_{u\in(\np(P\cap Q)\setminus\npo(P\cap Q))\cap I_1}\frac{\z(V(P\cap Q,u))}{h_{P\cap Q}(u)}u\\
		&~+\displaystyle\sum_{u\in(\np(P\cup Q)\setminus\npo(P\cup Q))\cap I_{3}}\frac{\z(V(P\cup Q,u))}{h_{P\cup Q}(u)}u
		+\displaystyle\sum_{u\in(\np(P\cap Q)\setminus\npo(P\cap Q))\cap I_{3}}\frac{\z(V(P\cap Q,u))}{h_{P\cap Q}(u)}u\\
		&=\displaystyle\sum_{u\in(\np(Q)\setminus\npo(Q))\cap I_{1}}\frac{\z(V(Q,u))}{h_Q(u)}u		
		+\displaystyle\sum_{u\in(\np(P)\setminus\npo(P))\cap I_{1}}\frac{\z(V(P,u))}{h_P(u)}u\\
		&~+\displaystyle\sum_{u\in(\np(P)\setminus\npo(P))\cap I_{3}}\frac{\z(V(P,u))}{h_P(u)}u
		+\displaystyle\sum_{u\in(\np(Q)\setminus\npo(Q))\cap I_{3}}\frac{\z(V(Q,u))}{h_Q(u)}u.
	\end{align*}
	It follows that \eqref{eqn:3.1} is equivalent to
	\begin{align*}
		&\displaystyle\sum_{u\in(\np(P\cup Q)\setminus\npo(P\cup Q))\cap I_2}\frac{\z(V(P\cup Q,u))}{h_{P\cup Q}(u)}u
		+\displaystyle\sum_{u\in(\np(P\cap Q)\setminus\npo(P\cap Q))\cap I_2}\frac{\z(V(P\cap Q,u))}{h_{P\cap Q}(u)}u\\
		&=\displaystyle\sum_{u\in(\np(P)\setminus\npo(P))\cap I_2}\frac{\z(V(P,u))}{h_P(u)}u
		+\displaystyle\sum_{u\in(\np(Q)\setminus\npo(Q))\cap I_2}\frac{\z(V(Q,u))}{h_Q(u)}u.
	\end{align*}
	
	Fix $u\in S^{n-1}$. Since for $P\in\poly$, $P\mapsto a_{P}(u)$ is a valuation, we have
	\[a_{P\cup Q}(u)+a_{P\cap Q}(u)=a_{P}(u)+a_{Q}(u)\]
	for all $P,Q\in\poly$ with $P\cup Q\in\poly$. Note that
	\[h_{P\cup Q}(u)=h_{P\cap Q}(u)=h_P(u)=h_Q(u)\]
	for $u\in I_{2}$. Then,
	\[V(P\cup Q,u)+V(P\cap Q,u)=V(P,u)+V(Q,u)\]
	for $u\in I_2$. Since $\z$ is a solution of Cauchy's functional equation, we obtain
	\begin{equation}\label{eqn:3.4}
	\frac{\z(V(P\cup Q,u))}{h_{P\cup Q}(u)}+\frac{\z(V(P\cap Q,u))}{h_{P\cap Q}(u)}
	=\frac{\z(V(P,u))}{h_P(u)}+\frac{\z(V(Q,u))}{h_Q(u)}
	\end{equation}
	for $u\in I_2$, where $P,Q\in\poly$ with $P\cup Q\in\poly$. Also, note that
	\begin{equation}\label{eqn:3.5}
		\np(P\cup Q)\cup\np(P\cap Q)=\np(P)\cup\np(Q).
	\end{equation}
	Combined with \eqref{eqn:3.4} and \eqref{eqn:3.5}, we obtain the desired valuation property.
	
	Next, we will show that the facet vector operator vanishes in the following two cases.
	
	If $\dim P\leq n-2$, it is clear that $\fv_\z(P)=0$ as $\np(P)=\varnothing$.
	
	If $\dim P=n-1$, then $h_P(u)=-h_P(-u)$, where $u,-u$ are the outer unit normals of $P$.
By the definition of the facet vector, we obtain $\fv_\z(P)=0$.
\end{proof}

Next, we prove the $\SL$ contravariance of the facet vector.

\begin{lem}\label{thm:contra}
	Let $\z:[0,\infty)\to\R$ be a solution of Cauchy's functional equation. Then,
	the facet vector operator $\fv_\z:\poly\to\R^n$ is $\SL$ contravariant.
\end{lem}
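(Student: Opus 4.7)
The plan is to compute $\fv_\z(\f P)$ directly by tracking how each ingredient in the sum transforms under $\f\in\SL$. The underlying structural observation is that left multiplication by $\f$ gives a bijection between facets of $P$ and facets of $\f P$: the facet $F(P,u)$ with outer unit normal $u$ is mapped to $\f F(P,u)$, which is the facet of $\f P$ having outer unit normal $u':=\f^{-t}u/\norm{\f^{-t}u}$. Thus, as $u$ ranges over $\np(P)$, the vector $u'$ ranges over $\np(\f P)$, and $F(\f P,u')=\f F(P,u)$.

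Next I would record the three transformation identities I need. From $h_{\f P}(v)=\max_{y\in P}\inp{y}{\f^t v}=h_P(\f^t v)$ applied at $v=u'$, one gets
\[h_{\f P}(u')=\frac{h_P(u)}{\norm{\f^{-t}u}},\qquad|h_{\f P}(u')|=\frac{|h_P(u)|}{\norm{\f^{-t}u}}.\]
For the cone volume, since $\f o = o$ and $|\det\f|=1$,
\[V(\f P,u')=\mathrm{vol}\,\conv{o,\f F(P,u)}=|\det\f|\,V(P,u)=V(P,u).\]
Finally, the condition $u\in\npo(P)$ is equivalent to $h_P(u)=0$, which by the support function identity is equivalent to $h_{\f P}(u')=0$, i.e.\ $u'\in\npo(\f P)$. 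Hence $u\mapsto u'$ restricts to a bijection $\np(P)\setminus\npo(P)\to\np(\f P)\setminus\npo(\f P)$.

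Substituting these into the definition yields
\begin{align*}
\fv_\z(\f P)
&=\sum_{u'\in\np(\f P)\setminus\npo(\f P)}\frac{\z(V(\f P,u'))}{|h_{\f P}(u')|}\,u'\\
&=\sum_{u\in\np(P)\setminus\npo(P)}\frac{\z(V(P,u))\,\norm{\f^{-t}u}}{|h_P(u)|}\cdot\frac{\f^{-t}u}{\norm{\f^{-t}u}}\\
&=\f^{-t}\sum_{u\in\np(P)\setminus\npo(P)}\frac{\z(V(P,u))}{|h_P(u)|}\,u=\f^{-t}\fv_\z(P),
\end{align*}
which is the desired $\SL$ contravariance. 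There is no real obstacle here beyond careful bookkeeping; the only point to watch is that the normalization factor $\norm{\f^{-t}u}$ appearing in $u'$ cancels with the one produced by the support function identity, which is precisely what makes the weight $1/|h_P(u)|$ (as opposed to, say, $1/|h_P(u)|^k$) the correct one to ensure $\SL$ contravariance.
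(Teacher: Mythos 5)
Your proof is correct and follows essentially the same approach as the paper's: set up the bijection $u\mapsto u'$ between $\np(P)\setminus\npo(P)$ and $\np(\f P)\setminus\npo(\f P)$, track how $h_P$ and $V(P,\cdot)$ transform, and observe that the normalization factors cancel. The only (cosmetic) difference is that you derive $V(\f P,u')=V(P,u)$ directly from $|\det\f|=1$ applied to the cone $\conv{o,F(P,u)}$, whereas the paper first records the separate scalings of $a_{\f P}$ and $h_{\f P}$ and multiplies them; both are equally valid.
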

\begin{proof}
	Let $\f\in\SL$. Note that
	\begin{equation}\label{eqn:3.6}
		u\in\np(P)\setminus\npo(P)~~\Leftrightarrow~~\tilde{u}\in\np(\f P)\setminus\npo(\f P)
	\end{equation}
	with
	\[\tilde{u}:=\norm{\f^{-t}u}^{-1}\f^{-t}u\]
	and that
	\[h_{\f P}(\tilde{u})=h_{P}(\f^{t}\tilde{u})=\norm{\f^{-t}u}^{-1}h_P(u),\,
	a_{\f P}(\tilde{u})=\norm{\phi^{-t}u}a_P(u).\]
	We have
	\begin{equation}\label{eqn:3.7}
		V(\f P,\tilde{u})=V(P,u).
	\end{equation}
	Applying \eqref{eqn:3.6}, \eqref{eqn:3.7} and the definition of the facet vector, we obtain
	\[\begin{array}{rl}
	\displaystyle\fv_\z(\f P)
	&=\displaystyle\sum_{\tilde{u}\in\np(\f P)\setminus\npo(\f P)}\frac{\z(V(\f P,\tilde{u}))}{h_{\f P}(\tilde{u})}\tilde{u}\\
	&=\displaystyle\sum_{u\in\np(P)\setminus\npo(P)}\frac{\z(V(P,u))}{\norm{\f^{-t}u}^{-1}h_P(u)}
	(\norm{\f^{-t}u}^{-1}\f^{-t}u)\\
	&=\displaystyle\sum_{u\in\np(P)\setminus\npo(P)}\frac{\z(V(P,u))}{h_P(u)}\f^{-t}u\\
	&=\displaystyle\f^{-t}\fv_\z(P).
	\end{array}\]
	Thus, we have finished the proof of the $\SL$ contravariance of the facet vector.
\end{proof}

Finally, the facet vector is related to an $\SLe$ covariant valuation
in dimension two up to a rotation.
Let $\z:[0,\infty)\to\R$ be a solution of Cauchy's functional equation.
Define $\vo{\z}:\polyeo\to\R^2$ by
\[\vo{\z}(P)=\sum_{i=2}^r\frac{\z\left(\det(v_{i-1},v_i)\right)}{\det(v_{i-1},v_i)}(v_{i-1}-v_i)\]
if $\dim P=2$ and
$P=\conv{o,v_1,\ldots,v_r}$ with $o\in\bd P$ and the vertices $\{o,v_1,\ldots, v_r\}$ are labeled counter-clockwisely;
\[\vo{\z}(P)=\frac{\z\left(\det(v_r, v_1)\right)}{\det(v_r, v_1)}(v_r-v_1) +\sum_{i=2}^r\frac{\z\left(\det(v_{i-1},v_i)\right)}{\det(v_{i-1},v_i)}(v_{i-1}-v_i)\]
if $o\in\intp P$ and $P=\conv{v_1,\ldots,v_r}$
with the vertices $\{v_1,\ldots, v_r\}$ are labeled counter-clockwisely;
\[\vo{\z}(P)=0\]
if $P=\set{o}$ or $P$ is a line segment.

\begin{lem}\label{thm:hfv}
	Let $\z:[0,\infty)\to\R$ be a solution of Cauchy's functional equation. Then
	\[\fv_\z(P)=\frac{1}{2}\rho_{\frac{\pi}{2}}\vo{\z}(P),\]
	for all $P\in\polyeo$.
\end{lem}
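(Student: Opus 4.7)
The plan is a direct, edge-by-edge comparison of the two sums defining $\fv_\z(P)$ and $\vo{\z}(P)$. First I would dispose of the degenerate cases: when $P=\set{o}$ or $\dim P=1$, both sides vanish immediately---$\vo{\z}$ by definition, and $\fv_\z$ by the argument at the end of the proof of Lemma~\ref{thm:simpval} (no facets in dimension $0$, and all outer unit normals in $\npo(P)$ in dimension $1$ since $\aff P\ni o$).

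For the main case $\dim P=2$, I would enumerate the vertices of $P$ counter-clockwise using the convention of $\vo{\z}$ (treating $o$ as a labeled vertex when $o\in\bd P$), and verify that the edges $\conv{v_{i-1},v_i}$ contributing to $\vo{\z}(P)$ are exactly those whose outer unit normals lie in $\np(P)\setminus\npo(P)$, i.e., those whose affine hulls do not contain $o$. This has to be checked separately in the three subcases: $o\in\intp P$ (all $r$ edges appear on both sides), $o$ a vertex of $P$ (the two edges incident to $o$ are omitted on both sides), and $o$ in the relative interior of an edge (that single edge is omitted on both sides).

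For a contributing edge $\conv{v_{i-1},v_i}$ the counter-clockwise orientation gives $u=\norm{v_i-v_{i-1}}^{-1}\rho_{-\frac{\pi}{2}}(v_i-v_{i-1})$, hence $a_P(u)u=\rho_{-\frac{\pi}{2}}(v_i-v_{i-1})$, and the cone volume is $V(P,u)=\frac{1}{2}\det(v_{i-1},v_i)>0$. Combined with $V(P,u)=\frac{1}{2}a_P(u)h_P(u)$ this gives $u/h_P(u)=\rho_{-\frac{\pi}{2}}(v_i-v_{i-1})/(2V(P,u))$. The Cauchy identity $\z(x/2)=\z(x)/2$, applied with $x=\det(v_{i-1},v_i)$, rewrites $\z(V(P,u))/V(P,u)$ as $\z(\det(v_{i-1},v_i))/\det(v_{i-1},v_i)$, and so
\[
\frac{\z(V(P,u))}{h_P(u)}u=\frac{1}{2}\frac{\z(\det(v_{i-1},v_i))}{\det(v_{i-1},v_i)}\rho_{-\frac{\pi}{2}}(v_i-v_{i-1}).
\]
Summing over the contributing edges, factoring out $\rho_{-\frac{\pi}{2}}$, reversing $v_i-v_{i-1}\mapsto -(v_{i-1}-v_i)$, and using $-\rho_{-\frac{\pi}{2}}=\rho_{\frac{\pi}{2}}$, yields $2\fv_\z(P)=-\rho_{-\frac{\pi}{2}}\vo{\z}(P)=\rho_{\frac{\pi}{2}}\vo{\z}(P)$, which is the claimed identity.

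The main obstacle I anticipate is not the algebra but the bookkeeping: correctly aligning the counter-clockwise labeling with the definition of $\vo{\z}$ in each of the three subcases for the position of $o$, and confirming in each that the excluded edges match on both sides. Once the parameterization is fixed, the per-edge computation is routine.
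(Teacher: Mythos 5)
Your proposal is correct and follows essentially the same route as the paper: a direct edge-by-edge identification of the contributing facets, the formula $V(P,u)=\tfrac12 a_P(u)h_P(u)$, the homogeneity $\z(x/2)=\z(x)/2$ from Cauchy's equation, and a sign-bookkeeping on the rotation. You supply slightly more detail than the paper on the degenerate and boundary subcases (the paper dispatches these with ``similar arguments also prove other cases''), but the core computation is identical.
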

\begin{proof}
	For $\dim P=2$ and $P=\conv{o,v_1,\ldots,v_r}$ with $o\in\bd P$ and
	the vertices $\{0,v_1,\ldots, v_r\}$ are labeled counter-clockwisely,
	we have
	\[\vo{\z}(P)=\sum_{i=2}^r\frac{\z\left(\det(v_{i-1},v_i)\right)}{\det(v_{i-1},v_i)}(v_{i-1}-v_i)
	=\sum_{i=2}^r\frac{\z\left(2V(\conv{o,v_{i-1},v_i})\right)}{2V(\conv{o,v_{i-1},v_i})}(v_{i-1}-v_i).\]
	Write $u_i=\frac{\rho_{\frac{\pi}{2}}(v_{i-1}-v_i)}{\norm{v_{i-1}-v_i}}$.
	Then, $u_i$ is the outer unit normal of $\conv{v_{i-1},v_i}$ and
	$\conv{o,v_{i-1},v_i}$ is the cone $\conv{o,F(P,u_i)}$. Therefore,
	\begin{align*}
	\rho_{\frac{\pi}{2}}\vo{\z}(P)
	&=\sum_{i=2}^r\frac{\z\left(2V(P,u_i)\right)}{2V(P,u_i)}\norm{v_{i-1}-v_i}u_i\\
	&=2\sum_{i=2}^r\frac{\z\left(V(P,u_i)\right)}{\norm{v_{i-1}-v_i}h_P(u_i)}\norm{v_{i-1}-v_i}u_i\\
	&=2\sum_{u\in\np(P)\setminus\npo(P)}\frac{\z(V(P,u))}{h_P(u)}u\\
	&=2\fv_\z(P).
	\end{align*}
	Similar arguments also prove other cases.
\end{proof}


\section{Proof of the main results on $\polyo$}

\subsection{The two-dimensional case}

First, we show a relation between $\SLe$ covariant functions
and $\SLe$ contravariant functions.
Let $\mathcal Q^2$ be either $\polyeo$ or $\polye$.

\begin{lem}\label{thm:cocontra}
	Let $ Z :\mathcal Q^2\to\R^2$. Then, $ Z $ is $\SLe$ covariant if and only if
	$\rho_{\frac{\pi}{2}} Z $ is $\SLe$ contravariant.
\end{lem}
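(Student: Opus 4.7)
The plan is to reduce the statement to a single algebraic identity relating $\phi$ and $\phi^{-t}$ for $\phi\in\SLe$ via the rotation $\rho_{\frac{\pi}{2}}$, and then pass freely between the covariance and contravariance relations.

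First I would record the key identity: for every $\phi\in\SLe$,
\[
	\rho_{\frac{\pi}{2}}\,\phi=\phi^{-t}\,\rho_{\frac{\pi}{2}}.
\]
This can be seen either by a direct $2\times 2$ matrix computation, or more conceptually from the fact that $\rho_{\frac{\pi}{2}}$ represents the determinant form on $\R^2$: for all $x,y\in\R^2$ one has $\det(x,y)=\inp{\rho_{\frac{\pi}{2}}x}{y}$, so the $\SLe$-invariance of $\det$ yields $\phi^t\rho_{\frac{\pi}{2}}\phi=\rho_{\frac{\pi}{2}}$, which is the identity above.

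Given the identity, both implications are now immediate. Assume first that $Z$ is $\SLe$ covariant. Then for every $P\in\mathcal Q^2$ and every $\phi\in\SLe$,
\[
	(\rho_{\frac{\pi}{2}}Z)(\phi P)=\rho_{\frac{\pi}{2}}\,\phi\,Z(P)=\phi^{-t}\,\rho_{\frac{\pi}{2}}\,Z(P)=\phi^{-t}(\rho_{\frac{\pi}{2}}Z)(P),
\]
so $\rho_{\frac{\pi}{2}}Z$ is $\SLe$ contravariant. Conversely, if $\rho_{\frac{\pi}{2}}Z$ is $\SLe$ contravariant, then applying $\rho_{\frac{\pi}{2}}^{-1}=\rho_{-\frac{\pi}{2}}$ to the identity $\rho_{\frac{\pi}{2}}Z(\phi P)=\phi^{-t}\rho_{\frac{\pi}{2}}Z(P)=\rho_{\frac{\pi}{2}}\phi Z(P)$ gives $Z(\phi P)=\phi Z(P)$, so $Z$ is $\SLe$ covariant.

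There is no real obstacle here beyond verifying the matrix identity; the entire content of the lemma is the conjugation formula $\phi^{-t}=\rho_{\frac{\pi}{2}}\phi\rho_{\frac{\pi}{2}}^{-1}$ on $\SLe$, which reflects the fact that in two dimensions the contragredient representation of $\SLe$ is isomorphic to the standard one via the symplectic form.
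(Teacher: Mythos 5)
Your proof is correct and follows essentially the same line as the paper: both establish the conjugation identity $\rho_{\frac{\pi}{2}}\phi=\phi^{-t}\rho_{\frac{\pi}{2}}$ for $\phi\in\SLe$ and then read off the two implications directly. The only difference is cosmetic: where the paper simply asserts the identity by ``a direct calculation,'' you additionally supply the conceptual justification via $\det(x,y)=\inp{\rho_{\frac{\pi}{2}}x}{y}$ and the $\SLe$-invariance of the determinant form, which is a pleasant way to see why the formula holds but does not change the argument.
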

\begin{proof}
    A direct calculation shows that
	\begin{equation}\label{eqn:-trho}
		\rho_{\frac{\pi}{2}}\f=\f^{-t}\rho_{\frac{\pi}{2}},
	\end{equation}
	for all $\f\in\SLe$.
    First, we assume that $ Z $ is $\SLe$ covariant.
    Together with \eqref{eqn:-trho}, we have
	\[\rho_{\frac{\pi}{2}} Z (\f P)=(\rho_{\frac{\pi}{2}}\f) Z (P)
	=(\f^{-t}\rho_{\frac{\pi}{2}}) Z (P)=\f^{-t}(\rho_{\frac{\pi}{2}} Z (P))\]
    for all $P\in\mathcal Q^2$ and $\f\in\SLe$.
	This proves that $\rho_{\frac{\pi}{2}} Z $ is $\SLe$ contravariant.
	
	Next, we assume $\rho_{\frac{\pi}{2}} Z $ is contravariant. Together with \eqref{eqn:-trho}, we have
	\[\rho_{\frac{\pi}{2}} Z (\f P)=(\f^{-t}\rho_{\frac{\pi}{2}}) Z (P)
	=(\rho_{\frac{\pi}{2}}\f) Z (P)=\rho_{\frac{\pi}{2}}(\f Z (P)).\]
	Hence, $ Z (\f P)=\f Z (P)$, which completes the proof.
\end{proof}

We will use the following result.

\begin{thm}[\cite{ZM18}]\label{thm:co2o}
	A function $ Z :\polyeo\to\R^2$ is an $\SLe$ covariant valuation
	if and only if there exist constants $c_1, c_2\in\R$
	and a solution of Cauchy's functional equation $\z:[0,\infty)\to\R$ such that
	\[ Z (P)=c_1\mv(P)+c_2\ve(P)+\vo{\z}(P)\]
	for every $P\in\polyeo$.
\end{thm}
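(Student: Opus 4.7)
My plan is to split the biconditional into a sufficiency check and a necessity argument that parallels the facet-vector development of Section 3 combined with the uniqueness machinery of Lemma \ref{lemuq}.

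For sufficiency, I would verify that each of $\mv$, $\ve$, and $\vo{\z}$ is individually an $\SLe$ covariant valuation on $\polyeo$. The moment vector $\mv$ is classical: additivity of Lebesgue integration yields the valuation property, while the change-of-variables formula with $\det\f=1$ gives covariance. For $\ve$, both properties can be checked by enumerating the finitely many combinatorial cases in its piecewise definition. For $\vo{\z}$, covariance follows from the identity $\det(\f v,\f w)=\det(v,w)$ for $\f\in\SLe$, and the valuation property mirrors the proof of Lemma \ref{thm:simpval}, with Cauchy's equation invoked on the edges shared by $P$ and $Q$ after decomposition.

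For necessity, the first step is to establish a covariant analog of Lemma \ref{lemuq}, reducing the problem to computing $Z$ on a list of combinatorial representatives: the singleton $\{o\}$, the segment $s\conv{o,e_1}$ with $o$ as an endpoint, the segment $\conv{-se_1,se_1}$ with $o$ in the relative interior, the triangle $sT^2$ with $o$ as a vertex, a triangle with $o$ in the relative interior of an edge, and a triangle with $o$ in the interior. Covariance on $\{o\}$ forces $Z(\{o\})=0$, since the only $\SLe$-fixed vector is zero. The unipotent upper-triangular stabilizer of $\conv{o,e_1}$ in $\SLe$ forces $Z(s\conv{o,e_1})$ to be parallel to $e_1$, and the analogous stabilizer argument applied to $\conv{-se_1,se_1}$ pins down that value as well (and is already consistent with $\ve$ vanishing there). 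For the two-dimensional representatives I would apply the triangulation identities from Definitions \ref{defn:fp1} and \ref{defn:fp2}, rewritten in covariant form (replacing $\f^{-t}$ by $\f$), to $sT^2$, yielding covariant analogues of \eqref{eqn:extk} and \eqref{eqn:extn}.

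The main obstacle is solving the resulting functional equations on $sT^2$: the combined relations must separate an $s^3$-polynomial contribution (which assembles into $c_1\mv$) from a piece whose $s$-dependence satisfies Cauchy's functional equation (which assembles into $\vo{\z}$), while a third homogeneity degree must be tracked to generate the $c_2\ve$ contribution on the boundary cases. This requires combining both coordinate components of the triangulation equations and linking them to the previously determined values on lower-dimensional faces, in the spirit of Haberl--Parapatits's removal of the homogeneity assumption in the analogous setting on $\mathcal{P}^n_{(o)}$. Once the values on all reference polytopes match the proposed expression $c_1\mv+c_2\ve+\vo{\z}$, the covariant uniqueness lemma completes the argument.
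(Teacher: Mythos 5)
This statement is not proved in the paper at all: it is imported verbatim from Zeng and the second author \cite{ZM18}, as the theorem header indicates. The present paper treats it as a black box and immediately uses it, together with Lemma \ref{thm:cocontra} (the rotation conjugation $\rho_{\pi/2}\f = \f^{-t}\rho_{\pi/2}$) and Lemma \ref{thm:hfv} (the identity $\fv_\z = \tfrac{1}{2}\rho_{\pi/2}\vo{\z}$), to transport the classification from the covariant side to the contravariant side and deduce Theorem \ref{thm:main20}. So there is no paper proof to compare your proposal against; you have instead sketched a from-scratch reproof of the cited result.

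On its own terms, your outline follows the right general template (sufficiency by checking each summand, necessity via stabilizer arguments and the $\f_1,\f_2,\p_1,\p_2$ triangulation identities, then a covariant analogue of the uniqueness Lemma \ref{lemuq}), and the easy pieces you flesh out are correct: $Z(\{o\}) = 0$ and $Z(s\conv{o,e_1}) \parallel e_1$ both follow from the stabilizers you name. However, you explicitly defer the hard part — separating the cubic ($\mv$), linear ($\ve$), and Cauchy ($\vo{\z}$) contributions from the coupled functional equations on two-dimensional triangles with $o$ at a vertex, on an edge, or in the interior — by saying it is ``in the spirit of'' Haberl--Parapatits. That is precisely the technical core of \cite{ZM18}, and leaving it at this level means the proposal is a plan rather than a proof. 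Since the paper's own strategy is to cite \cite{ZM18} here and prove only the $n\geq 3$ case and the translation lemmas, a closer read of the source would have shown you that reproducing the two-dimensional covariant classification was not required.
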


Using the relation and theorem above, we obtain the following proof.

\begin{proof}[Proof of Theorem \ref{thm:main20}]
	By Lemma \ref{thm:cocontra}, $ Z $ is an $\SLe$ contravariant valuation
	if and only if $\rho_{\frac{\pi}{2}}^{-1} Z $ is an $\SLe$ covariant valuation.
	Then, by Theorem \ref{thm:co2o}, there exist constants $c_1, c_2\in\R$
	and a solution of Cauchy's functional equation $\bar{\z}:[0,\infty)\to\R$ such that
	\[\rho_{\frac{\pi}{2}}^{-1} Z (P)=\vo{\bar{\z}}(P)+c_1\mv(P)+c_2\ve(P)\]
	i.e.
	\[ Z (P)=\rho_{\frac{\pi}{2}}\vo{\bar{\z}}(P)+c_1\rho_{\frac{\pi}{2}}\mv(P)+c_2\rho_{\frac{\pi}{2}}\ve(P)\]
	for every $P\in\polyeo$. Set $\z=2\bar{\z}$. By Lemma \ref{thm:hfv}, we obtain
	\[ Z (P)=\fv_\z(P)+c_1\rho_{\frac{\pi}{2}}\mv(P)+c_2\rho_{\frac{\pi}{2}}\ve(P),\]
	which completes the proof.
\end{proof}


\subsection{The higher-dimensional case}

First, we state the following simple proposition.

\begin{prop}\label{thm:id}
	Let $n\geq3$ and $ Z :\polyo\to\R^n$ be an $\SL$ contravariant function.
	Then, there exists a constant $a\in\R$ such that $ Z (T^n)=a\id$.
\end{prop}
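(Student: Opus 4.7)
The plan is to exploit symmetries of $T^n$. The standard simplex $T^n=\conv{o,e_1,\ldots,e_n}$ is preserved as a set by any permutation of the basis vectors $e_1,\ldots,e_n$ (the origin being a fixed vertex). Among such permutation matrices, those corresponding to \emph{even} permutations have determinant $+1$ and therefore lie in $\SL$. This family of $\SL$ symmetries is rich enough, when $n\geq 3$, to force $Z(T^n)$ to be a multiple of $\id$.

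Concretely, I would proceed as follows. First, for each even permutation $\sg\in A_n$, let $P_\sg\in\SL$ denote the associated permutation matrix, which satisfies $P_\sg T^n=T^n$. By $\SL$ contravariance,
\[Z(T^n)=Z(P_\sg T^n)=P_\sg^{-t}Z(T^n).\]
Since permutation matrices are orthogonal, $P_\sg^{-t}=P_\sg$, and the identity becomes $P_\sg Z(T^n)=Z(T^n)$. Writing $Z(T^n)=(z_1,\ldots,z_n)^t$, this reads $z_{\sg(i)}=z_i$ for every $i$ and every $\sg\in A_n$.

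The last ingredient is that for $n\geq 3$ the alternating group $A_n$ acts transitively on $\set{1,\ldots,n}$: indeed, given $i\neq j$, one may pick any third index $k$ and take the $3$-cycle $(i\,j\,k)\in A_n$, which sends $i$ to $j$. Hence all coordinates of $Z(T^n)$ coincide, giving $Z(T^n)=a\id$ for some $a\in\R$.

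There is essentially no hard step here; the only thing to watch is the hypothesis $n\geq 3$, which is exactly what makes $A_n$ transitive on coordinates (for $n=2$, $A_2$ is trivial and the argument collapses, consistent with the fact that $T^2$ has no nontrivial $\SLe$ symmetries and the conclusion genuinely fails). One should also note that the argument uses only $\SL$ contravariance, not the valuation property, which is why the statement is phrased for $\SL$ contravariant \emph{functions}.
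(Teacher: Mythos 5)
Your proof is correct and takes essentially the same approach as the paper: the paper applies $\SL$ contravariance with the specific $3$-cycle permutation matrix $\sg_0$ (and, for $n\ge 4$, block-diagonal copies of it sliding along the diagonal), which is just a concrete generating family for the transitive action of $A_n$ on $\set{1,\ldots,n}$ that you invoke directly. In both cases the key facts are that even permutation matrices preserve $T^n$, lie in $\SL$, satisfy $P^{-t}=P$ by orthogonality, and force all coordinates of $Z(T^n)$ to coincide once $n\ge 3$.
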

\begin{proof}
	We first consider $n=3$. Write $ Z (T^3)=(x_1,x_2,x_3)^t$ and
	\[\sg_0=\left(\begin{array}{ccc}
		0 & 0 & 1\\
		1 & 0 & 0\\
		0 & 1 & 0
	\end{array}\right)\in\SLs.\]
	The $\SLs$ contravariance of $ Z $ implies
	\[ Z (T^3)= Z (\sg_0T^3)=\sg_0^{-t} Z (T^3),\]
	i.e.
	\[\left(\begin{array}{c}x_1\\x_2\\x_3\end{array}\right)=\left(\begin{array}{ccc}
		0 & 0 & 1\\
		1 & 0 & 0\\
		0 & 1 & 0
	\end{array}\right)\left(\begin{array}{c}x_1\\x_2\\x_3\end{array}\right)=\left(\begin{array}{c}x_3\\x_1\\x_2\end{array}\right).\]
	Thus, $x_1=x_2=x_3$.
	
	Next, we consider $n\geq4$. Write $ Z (T^n)=(x_1,\ldots,x_n)^t$ and
	\[\sg=\left(\begin{array}{ccc}
		I_r & &\\
		& \sg_0 &\\
		& & I_{n-r-3}
	\end{array}\right)\in\SL,\]
	where $r=0,1,\ldots,n-3$ and $\sg_0$ moves along the main diagonal of $\sg$.
	Using the $\SL$ contravariance of $ Z $, we have $ Z (T^n)= Z (\sg T^n)=\sg^{-t} Z (T^n)$. This yields $x_1=\cdots=x_n$. Therefore, $ Z (T^n)=a\id$ for some $a \in \R$.
\end{proof}

Next, we obtain a property of simple valuations.

\begin{lem}\label{thm:stk}
	Let $n\geq2$ and $ Z :\polyo\to\R^n$ be an $\SL$ contravariant valuation.
	Then, $ Z $ is simple if $ Z (T^k)=0$ for $k=0,1,\ldots,n-1$.
\end{lem}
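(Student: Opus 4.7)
My plan is to proceed in two stages: first show that $Z$ vanishes on every simplex in $\mathcal T^k$ with $k \leq n-1$, and then extend this to all lower-dimensional polytopes in $\polyo$ via triangulation and the inclusion-exclusion principle stated earlier in the paper.

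For the first stage, given $T \in \mathcal T^k$ with $k \leq n-1$, I would write $T = \conv{o, v_1, \ldots, v_k}$ with $v_1, \ldots, v_k$ linearly independent, extend them to a basis $v_1, \ldots, v_n$ of $\R^n$, and then rescale $v_n$ so that $\det(v_1, \ldots, v_n) = 1$. This rescaling is permissible precisely because $k < n$, so $e_n$ is not among the vertices of $T^k$ and the image $\conv{o, v_1, \ldots, v_k}$ is unaffected by how one chooses $\phi e_n$. The linear map $\phi \in \SL$ defined by $\phi e_i = v_i$ then satisfies $\phi T^k = T$, and the $\SL$ contravariance of $Z$ combined with the hypothesis $Z(T^k) = 0$ yields $Z(T) = \phi^{-t} Z(T^k) = 0$.

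For the second stage, fix $P \in \polyo$ with $d := \dim P \leq n - 1$. I would choose a triangulation $\{T_1, \ldots, T_r\}$ of $P$ into $d$-simplices in which $o$ is a vertex of each $T_i$, so that every $T_i$ belongs to $\mathcal T^d$. The inclusion-exclusion principle then gives
\[
Z(P) = \sum_{\varnothing \neq S \subseteq \{1,\ldots,r\}} (-1)^{|S|-1}\, Z\!\left(\bigcap_{i \in S} T_i\right).
\]
Each intersection is a common face of the constituent simplices, hence a simplex of dimension at most $d$ having $o$ as a vertex, and therefore lies in $\mathcal T^{d'}$ for some $d' \leq d \leq n-1$. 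By the first stage every term on the right vanishes, so $Z(P) = 0$, which proves simplicity.

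The principal obstacle, I anticipate, is establishing the existence of such a triangulation in which $o$ appears as a vertex of every simplex. When $o$ is a vertex of $P$ a pulling triangulation from $o$ works directly, and when $o$ lies in the relative interior of $P$ one simply cones a triangulation of the relative boundary to $o$. The delicate case is when $o$ lies on the relative boundary of $P$ without being a vertex of $P$: here one first subdivides $P$ along suitable hyperplanes through $o$ so that $o$ becomes a vertex of the refined decomposition, and then pulls from $o$. Verifying that all intermediate polytopes and the multiway intersections remain in $\polyo$, so that inclusion-exclusion is legitimate throughout, is the technical heart of the argument.
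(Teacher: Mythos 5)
Your proof is correct and matches the paper's strategy: reduce to simplices in $\mathcal{T}^k$ by a triangulation with $o$ as a vertex of each piece together with inclusion-exclusion, then observe that every such simplex with $k\leq n-1$ is the image of $T^k$ under some $\phi\in\SL$, so $\SL$-contravariance forces $Z$ to vanish there. The paper's proof takes a small detour through $sT^k$ via an explicit diagonal matrix $\rho\in\SL$, but as you correctly note the scaling factor is superfluous for $k<n$ because the unused basis vector can absorb the determinant normalization; the paper also leaves the triangulation details just as implicit as you do.
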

\begin{proof}
    First, using triangulations of polytopes,
	it suffices to prove $ Z $ vanishes on $\mathcal T^k$ for $k=0,1,\ldots,n-1$.
	Since every $T\in\mathcal T^k$ is an $\SL$ image of $sT^k$ for $s\neq0$, we only need to consider $sT^k$. Now, write
	\[\rho=\left(\begin{array}{ccc}
		sI_k & &\\
		& I_{n-k-1} &\\
		& & s^{-k}
	\end{array}\right)\in\SL.\]
	The $\SL$ contravariance of $ Z $ gives $ Z (sT^k)= Z (\rho T^k)=\rho^{-t} Z (T^k)$.
	By the assumption that $ Z (T^k)=0$ for $k=0,1,\ldots,n-1$, we obtain that $ Z $ vanishes on all $sT^k$ for $s\neq0$ and $k=0,1,\ldots,n-1$. Therefore, $ Z $ is simple.
\end{proof}

Now, we investigate $\SL$ contravariant valuations on $\mathcal T^k$.

\begin{lem}\label{thm:nsimple}
	Let $n\geq3$ and $ Z :\polyo\to\R^n$ be an $\SL$ contravariant valuation.
	Then, $ Z $ is simple.
\end{lem}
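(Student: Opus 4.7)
By Lemma~\ref{thm:stk}, it suffices to show $ Z (T^k)=o$ for every $k=0,1,\ldots,n-1$. For each such $k$ I would pick a sufficiently rich subgroup of $\SL$ that stabilizes $T^k$ (as a set) and use the resulting identities $ Z (T^k)= Z (\f T^k)=\f^{-t} Z (T^k)$ coming from $\SL$ contravariance. The pool of available stabilizers shrinks as $k$ grows, so the only genuinely delicate case will be $k=n-1$.

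For $k=0$, the set $T^0=\set{o}$ is fixed by every $\f\in\SL$, so $ Z (\set{o})$ is fixed by every $\f^{-t}$, which forces $ Z (\set{o})=o$. For $1\le k\le n-2$, write $ Z (T^k)=(x_1,\ldots,x_n)^t$ and apply two families of stabilizers. First, the block matrices $\f=\mathrm{diag}(I_k,A)$ with $A\in\mathrm{SL}(n-k)$ fix $T^k$ pointwise; since $n-k\ge 2$, varying $A$ and using $ Z (T^k)=\f^{-t} Z (T^k)$ forces $x_{k+1}=\cdots=x_n=0$. Second, the shears defined by $\f e_i=e_i$ for $i\le k$ and $\f e_j=e_j+\sum_{i\le k}c_{ij}e_i$ for $j>k$ also fix $T^k$ pointwise; a direct computation of $\f^{-t}$ shows that when the $c_{ij}$ range freely over $\R$, the contravariance relation forces $x_1=\cdots=x_k=0$. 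Together these give $ Z (T^k)=o$.

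The main obstacle is the case $k=n-1$. The shear family still applies and yields $x_1=\cdots=x_{n-1}=0$, so $ Z (T^{n-1})=x_n e_n$ for some scalar $x_n$, but the block family collapses to $\mathrm{SL}(1)=\set{1}$ and gives no information about $x_n$. To eliminate $x_n$ I would use a signed permutation: let $\sg\in\SL$ be defined by $\sg e_1=e_2$, $\sg e_2=e_1$, $\sg e_n=-e_n$, and $\sg e_i=e_i$ for $3\le i\le n-1$. Then $\det\sg=1$, and $\sg T^{n-1}=T^{n-1}$ because $\sg$ merely permutes the non-origin vertices $e_1,\ldots,e_{n-1}$ of $T^{n-1}$. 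Since $\sg$ is an involutive signed-permutation matrix, $\sg^{-t}=\sg$, and $\SL$ contravariance then gives $x_n e_n= Z (T^{n-1})=\sg(x_n e_n)=-x_n e_n$, so $x_n=0$. The hypothesis $n\ge 3$ is essential here, as it provides two distinct non-origin vertices $e_1,e_2$ that can be swapped without disturbing the basis vector $e_n$ used to compensate the sign of the determinant.

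Combining the three cases yields $ Z (T^k)=o$ for every $0\le k\le n-1$, and Lemma~\ref{thm:stk} then delivers the simplicity of $ Z $.
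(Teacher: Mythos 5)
Your proof is correct and follows a genuinely different path from the paper's. The paper invokes the valuation property inside the lemma: after reducing the coordinate count by a couple of signed permutations, it feeds the simplex $T^k$ into the triangulation identity \eqref{eqn:extk} (which comes from inclusion--exclusion along a hyperplane cut) to relate $Z(T^k)$ to $Z(T^{k-1})$, and closes the loop by induction on $k$. Your argument, by contrast, never touches the triangulation at all within this lemma: it exploits only the stabilizer of $T^k$ in $\SL$, namely the block subgroup $\mathrm{diag}(I_k,\mathrm{SL}(n-k))$, the unipotent shears $\begin{pmatrix}I_k & C\\ 0 & I_{n-k}\end{pmatrix}$, and for $k=n-1$ a single signed permutation. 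The computation $\f^{-t}=\begin{pmatrix}I_k & 0\\ -C^t & I_{n-k}\end{pmatrix}$ is correct, and since $C$ is free this kills $x_1,\dots,x_k$; the block subgroup kills the tail when $n-k\ge 2$; and the signed permutation handles the last coordinate when $k=n-1$, where you correctly note $n\ge3$ is what makes the compensating transposition available. Your route is more elementary and more transparent about what the hypothesis $n\ge3$ is really doing; the paper's route has the advantage of reusing \eqref{eqn:extk}, a tool it needs anyway in the proof of Theorem~\ref{thm:mainN0}. Both uses of the valuation hypothesis beyond Lemma~\ref{thm:stk} are thus confined to the paper's version, not yours. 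One very small presentational remark: you say the block matrices and shears fix $T^k$ \emph{pointwise}, which is true, but all that is needed (and all you use) is that they fix $T^k$ as a set, which is the hypothesis under which $Z(T^k)=\f^{-t}Z(T^k)$ follows.
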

\begin{proof}
	Due to Lemma \ref{thm:stk}, it suffices to prove $ Z $ vanishes on $T^k$ for $k=0,1,\ldots,n-1$.
	We prove the statement by induction on the dimension $k$.
	
	For $k=0$, write $ Z (\set{o})=(v_1,\ldots,v_n)^t$,
	\[\sg_1=\left(\begin{array}{cc}
		-1 & 0\\
		0 & -1
	\end{array}\right)\text{ and }\sg_2=\left(\begin{array}{ccc}
		I_r & &\\
		& \sg_1 &\\
		& & I_{n-r-2}
	\end{array}\right)\in\SL,\]
	where $r=0,1,\ldots,n-2$.
	Using the $\SL$ contravariance of $ Z $, we have $ Z (\set{o})= Z (\sg_2\set{o})=\sg_2^{-t} Z (\set{o})$.
	Hence, $v_1=\cdots=v_n=0$.
	
	For $k=1$, write $ Z (T^1)=(w_1,\ldots,w_n)^t$ and
	\[\sg_3=\begin{pmatrix}
		I_r & &\\
		& \sg_1 &\\
		& & I_{n-r-2}
	\end{pmatrix}\in\SL,\]
	where $r=1,\ldots,n-2$.
	Using the $\SL$ contravariance of $ Z $, we have $ Z (T^1)= Z (\sg_3T^1)=\sg_3^{-t} Z (T^1)$.
	Thus, $w_2=\cdots=w_n=0$ and $ Z (T^1)=w_1e_1$.
	
	For $k=2$, write $ Z (T^2)=(x_1,\ldots,x_n)^t$.
	If $n=3$, we consider
	\[\sg_4=\begin{pmatrix}
		0 & 1 & 0\\
		1 & 0 & 0\\
		0 & 0 & -1
	\end{pmatrix}\in\SLs.\]
	The $\SLs$ contravariance of $ Z $ implies $ Z (T^2)= Z (\sg_4T^2)=\sg_4^{-t} Z (T^2)$.
	Thus, $x_1=x_2$ and $x_3=0$.
	If $n\geq4$, we consider
	\[\sg_5=\begin{pmatrix}
		\sg_4 & 0\\
		0 & I_{n-3}
	\end{pmatrix}\in\SL\text{ and }\sg_6=\begin{pmatrix}
		I_r & &\\
		& \sg_1 &\\
		& & I_{n-r-2}
	\end{pmatrix}\in\SL,\]
	where $r=2,\ldots,n-2$. By the $\SL$ contravariance of $ Z $,
	we have $ Z (T^2)= Z (\sg_5T^2)=\sg_5^{-t} Z (T^2)$ and $ Z (T^2)= Z (\sg_6T^2)=\sg_6^{-t} Z (T^2)$.
	Thus, $x_1=x_2$, $x_3=\cdots=x_n=0$ and $ Z (T^2)=x_1(e_1+e_2)$.
	Now, we use the triangulation in Definition \ref{defn:fp1}.
	Equation \eqref{eqn:extk} is equivalent to
	\[\begin{pmatrix}
		\frac{1}{\l} & -\frac{1-\l}{\l} & 0 & \cdots & 0\\
		-\frac{\l}{1-\l} & \frac{1}{1-\l} & 0 & \cdots & 0\\
		0 & 0 & 1 & \cdots & 0\\
		\vdots & \vdots & \vdots & \ddots & \vdots\\
		0 & 0 & 0 & \cdots & 0
	\end{pmatrix}\begin{pmatrix}x_1\\x_2\\0\\ \vdots\\0\end{pmatrix}
	=\begin{pmatrix}
		\frac{1}{\l} & -\frac{1-\l}{\l} & 0 & \cdots & 0\\
		0 & 1 & 0 & \cdots & 0\\
		0 & 0 & 1 & \cdots & 0\\
		\vdots & \vdots & \vdots & \ddots & \vdots\\
		0 & 0 & 0 & \cdots & \l
	\end{pmatrix}\begin{pmatrix}w_1\\0\\0\\ \vdots\\0\end{pmatrix}.\]
	This yields $x_1=w_1=0$. Therefore, $ Z $ vanishes on $T^1$ and $T^2$.

    Next, assume $ Z (T^{k-1})=0$ for $3\leq k\leq n-1$.
    Write $ Z (T^k)=(y_1,\ldots,y_n)^t$ and
    \[\sg_7=\begin{pmatrix}
		 &1 &  &\\
        1&  &  &\\
		 &  &I_{n-3} &\\
		 &  &  & -1
	\end{pmatrix}\in\SL.\]
    By the $\SL$ contravariance of $ Z $,
	we have $ Z (T^k)= Z (\sg_7T^k)=\sg_7^{-t} Z (T^k)$.
    Thus $y_1=y_2$.
	
	Finally, we use the triangulation in Definition \ref{defn:fp1}. Equation \eqref{eqn:extk} is equivalent to
	\[\begin{pmatrix}
		\frac{1}{\l} & -\frac{1-\l}{\l} & 0 & \cdots & 0\\
		-\frac{\l}{1-\l} & \frac{1}{1-\l} & 0 & \cdots & 0\\
		0 & 0 & 1 & \cdots & 0\\
		\vdots & \vdots & \vdots & \ddots & \vdots\\
		0 & 0 & 0 & \cdots & 1
	\end{pmatrix}\begin{pmatrix}y_1\\y_2\\y_3\\ \vdots\\y_n\end{pmatrix}
	=\begin{pmatrix}0\\0\\0\\ \vdots\\0\end{pmatrix}.\]
    Together with $y_1=y_2$, this yields $y_1=\cdots=y_n=0$.
    Therefore, $ Z (T^k)=0$, which completes the proof.
\end{proof}

Finally, we obtain the following classification.

\begin{proof}[Proof of Theorem \ref{thm:mainN0}]
	Let $\z:[0,\infty)\to\R$ be a solution of Cauchy's functional equation.
	Due to Lemmas \ref{thm:simpval} and \ref{thm:contra},
	$\fv_\z$ is an $\SL$ contravariant valuation on $\polyo$. It remains to show the reverse statement.
	
	We use the triangulation in Definition \ref{defn:fp2}. By \eqref{eqn:extn} and Lemma \ref{thm:nsimple},
	we have for $s>0$
	\[ Z (s^{\frac{1}{n}}T^n) =\l^{\frac{1}{n}}\f_2^{-t} Z ((\l s)^{\frac{1}{n}}T^n) +(1-\l)^{\frac{1}{n}}\p_2^{-t} Z (((1-\l)s)^{\frac{1}{n}}T^n).\]
	By Proposition \ref{thm:id}, there exists a function $f:[0,\infty)\to\R$ such that $ Z (sT^n)=f(s)\id$ and
	\[f(s^{\frac{1}{n}}) \id =\l^{\frac{1}{n}}\f_2^{-t} f((\l s)^{\frac{1}{n}})\id +(1-\l)^{\frac{1}{n}}\p_2^{-t} f(((1-\l)s)^{\frac{1}{n}}) \id.\]
	In other words,
	\[f(s^{\frac{1}{n}})=\l^{\frac{1}{n}}f((\l s)^{\frac{1}{n}}) +(1-\l)^{\frac{1}{n}}f(((1-\l)s)^{\frac{1}{n}}).\]
	Set $s=a+b$, $\l=a/(a+b)$ for $a,b>0$, and $g(x)=x^{\frac{1}{n}}f(x^{\frac{1}{n}})$ for $x>0$ to get
	\[g(a+b)=g(a)+g(b).\]
	Hence, $g$ is a solution of Cauchy's functional equation and
	\[ Z (s^{\frac{1}{n}}T^n)=\frac{g(s)}{s^{\frac{1}{n}}}\id.\]
	Setting $\z\left(s/n!\right)=g(s)$, we obtain $ Z (s^{\frac{1}{n}}T^n)=\fv_\z(s^{\frac{1}{n}}T^n)$.
	The proof is now completed by Lemma \ref{lemuq}.
\end{proof}


\section{Proof of the main results on $\poly$}

First, we treat the case for $\polye$.
We need the following result.

\begin{thm}[\cite{ZM18}]\label{thm:co2}
	A function $ Z :\polye\to\R^2$ is an $\SLe$ covariant valuation
	if and only if there exist constants $c_1,c_2,\tilde c_1,\tilde c_2\in\R$
	and solutions of Cauchy's functional equation $\z_1,\z_2:[0,\infty)\to\R$ such that
	\begin{align*}
    Z (P)&=\vo{\z_1}(\conv{o,P})+\sum_{i=2}^r\vo{\z_2}(\conv{o,v_{i-1},v_i}) \\
    &\qquad +c_1\mv(P)+\tilde c_1\mv(\conv{o,P})+c_2\ve(\conv{o,P})
	+\tilde c_2\ve(\conv{o,v_1,\ldots,v_r})
    \end{align*}
	for every polytope $P\in\polye$ with vertices $v_1,\ldots,v_r$ visible from the origin and labeled counter-clockwisely.
\end{thm}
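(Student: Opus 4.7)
The strategy is to reduce the classification on $\polye$ to the $\polyeo$ case (Theorem~\ref{thm:co2o}) by peeling off contributions coming from origin-containing polytopes and then classifying the remainder, which must be an $\SLe$-covariant valuation vanishing on $\polyeo$. For the \emph{if} direction, I would verify each summand on the right-hand side is an $\SLe$-covariant valuation on $\polye$. The term $c_1\mv(P)$ is standard. Because $P\mapsto\conv{o,P}$ interacts correctly with unions and intersections of planar polytopes (a routine case-check based on which of $P,Q,P\cup Q,P\cap Q$ contain $o$), the terms $\vo{\z_1}(\conv{o,P})$, $\tilde c_1\mv(\conv{o,P})$, and $c_2\ve(\conv{o,P})$ inherit the valuation property from their behavior on $\polyeo$. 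The two delicate summands are the triangulation sum $\sum_{i=2}^r\vo{\z_2}(\conv{o,v_{i-1},v_i})$ and the visible-vertex term $\tilde c_2\ve(\conv{o,v_1,\ldots,v_r})$; for these I would verify \eqref{eqn:val} directly by cutting $P$ along a line $H$ and tracking how the counter-clockwise list of visible vertices changes between $P,Q,P\cup Q,P\cap Q$. The $\SLe$-covariance of every summand is immediate since visibility from $o$ and counter-clockwise orientation are $\SLe$-invariant.

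For the \emph{only if} direction, let $Z$ be an $\SLe$-covariant valuation on $\polye$. Restricting to $\polyeo$ and applying Theorem~\ref{thm:co2o} produces scalars $a,b$ and a Cauchy solution $\z_1$ with $Z(P)=a\mv(P)+b\ve(P)+\vo{\z_1}(P)$ on $\polyeo$. Writing $a=c_1+\tilde c_1$ (with $c_1$ to be fixed later) and $b=c_2$, define
\[ Z_0(P)=Z(P)-c_1\mv(P)-\vo{\z_1}(\conv{o,P})-\tilde c_1\mv(\conv{o,P})-c_2\ve(\conv{o,P}). \]
Then $Z_0$ is $\SLe$-covariant on $\polye$ and vanishes on $\polyeo$ (since $\conv{o,P}=P$ when $o\in P$). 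It remains to show that any such $Z_0$ equals $\sum_{i=2}^r\vo{\z_2}(\conv{o,v_{i-1},v_i})+\tilde c_2\ve(\conv{o,v_1,\ldots,v_r})$ for some Cauchy solution $\z_2$ and constant $\tilde c_2$. Following the scheme of Lemmas~\ref{thm:stk}--\ref{thm:nsimple} adapted to $n=2$, I would use $\SLe$-covariance applied to stabilizers of canonical simplices in $\tilde{\mathcal T}^2$ and $\tilde{\mathcal T}^3$ (segments and triangles not meeting the origin) to reduce $Z_0$ on these pieces to a small number of scalar parameters depending on a size $s$. Planar analogues of the triangulations in Definitions~\ref{defn:fp1} and \ref{defn:fp2}, combined with inclusion--exclusion, then yield functional equations forcing the scalar parameters to produce a Cauchy solution $\z_2$ and a constant $\tilde c_2$, while a planar version of Lemma~\ref{lemuq2} propagates this agreement from simplices to all of $\polye$.

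The main obstacle I expect is this final classification step: proving that an $\SLe$-covariant valuation on $\polye$ vanishing on $\polyeo$ is exactly captured by the two vertex-structure terms. Without an origin-containing anchor, the structure has to be extracted from how $Z_0$ transforms under shears and diagonal maps that fix a reference simplex while moving the origin relative to it. The subtle point is to show that the counter-clockwise visible-vertex data is the \emph{only} admissible source of nontrivial behavior, and that the consecutive-triangle decomposition $\sum_{i=2}^r\vo{\z_2}(\conv{o,v_{i-1},v_i})$ is actually a valuation on all of $\polye$ (not merely additive across a chosen triangulation). Disentangling this sum from the separate single term $\tilde c_2\ve(\conv{o,v_1,\ldots,v_r})$ is itself delicate, since both are built from the same visible-vertex list and must be distinguished by how they respond to $\SLe$ transformations that permute the visible vertices.
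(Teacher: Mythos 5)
This theorem is imported verbatim from Zeng--Ma \cite{ZM18}; the paper states it with a citation and uses it as a black box in the proof of Theorem~\ref{thm:main2}, so there is no in-paper proof to compare your attempt against. Your proposal must therefore be judged on its own terms, and as a self-contained argument it has real gaps. In the ``if'' direction, the assertion that $P\mapsto\sum_{i=2}^r\vo{\z_2}(\conv{o,v_{i-1},v_i})$ and $P\mapsto\ve(\conv{o,v_1,\ldots,v_r})$ satisfy \eqref{eqn:val} on $\polye$ is only announced, not verified; the combinatorics of how the ordered visible-vertex list changes under $P,Q\mapsto P\cup Q,P\cap Q$ (vertices appearing and disappearing, the origin crossing from exterior to boundary, degenerate $P\cup Q$ being a segment) is precisely the nontrivial content of the valuation property here, and none of it is carried out.

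In the ``only if'' direction your reduction to a covariant valuation $Z_0$ vanishing on $\polyeo$ is the natural move --- it parallels what the paper does for $n\ge 3$ when passing from Theorem~\ref{thm:mainN0} to Theorem~\ref{thm:mainN} --- but you stop at exactly the step that constitutes the theorem. Moreover your stated target for $Z_0$ is too narrow: because you split $a=c_1+\tilde c_1$ arbitrarily, the map $P\mapsto\mv(P)-\mv(\conv{o,P})$ is itself an $\SLe$-covariant valuation vanishing on $\polyeo$ and must appear in the classification of such $Z_0$, so the remainder is not captured by $\sum_{i=2}^r\vo{\z_2}(\conv{o,v_{i-1},v_i})+\tilde c_2\,\ve(\conv{o,v_1,\ldots,v_r})$ alone unless the free parameter $c_1$ is pinned down simultaneously. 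You flag this at the end (``the main obstacle I expect is this final classification step''), which is honest, but it means the proposal is a plan rather than a proof: the reduction to simplices $s\tilde T^1,s\tilde T^2$, the Cauchy equations extracted from the $\f_2,\p_2$-type dissections, the separation of the $\vo{\z_2}$ sum from the $\ve$ term, and a two-dimensional analogue of Lemma~\ref{lemuq2} all remain to be supplied.
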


Now, similar to the proof of Theorem \ref{thm:main20},
we obtain the characterization in dimension two.

\begin{proof}[Proof of Theorem \ref{thm:main2}]
	By Lemma \ref{thm:cocontra}, $ Z $ is an $\SLe$ contravariant valuation
	if and only if $\rho_{\frac{\pi}{2}}^{-1} Z $ is an $\SLe$ covariant valuation.
	Then, by Theorem \ref{thm:co2}, there exist constants $c_1,c_2,\tilde c_1,\tilde c_2\in\R$
	and solutions of Cauchy's functional equation $\bar\z_1,\bar\z_2:[0,\infty)\to\R$ such that
	\begin{align*}
		\rho_{\frac{\pi}{2}}^{-1} Z (P)&=\vo{\bar\z_1}(\conv{o,P})+\sum_{i=2}^r\vo{\bar\z_2}(\conv{o,v_{i-1},v_i})\\
		&\qquad +c_1\mv(P)+\tilde c_1\mv (\conv{o,P})+c_2\ve(\conv{o,P})+\tilde c_2\ve(\conv{o,v_1,\ldots,v_r})
	\end{align*}
	i.e.
	\begin{align*}
		Z (P)&=\rho_{\frac{\pi}{2}}\vo{\bar\z_1}(\conv{o,P})+\sum_{i=2}^r\rho_{\frac{\pi}{2}}\vo{\bar\z_2}(\conv{o,v_{i-1},v_i})\\
		&\qquad +c_1\rho_{\frac{\pi}{2}}\mv(P)+\tilde c_1\rho_{\frac{\pi}{2}}\mv(\conv{o,P})
		+c_2\rho_{\frac{\pi}{2}}\ve(\conv{o,P})+\tilde c_2\rho_{\frac{\pi}{2}}\ve(\conv{o,v_1,\ldots,v_r})
	\end{align*}
	for every polytope $P\in\polye$ with vertices $v_1,\ldots,v_r$ visible from the origin and labeled counter-clockwisely.
	By Lemma \ref{thm:hfv}, we obtain
	\begin{align*}
		 Z (P)&=\fv_{2\bar\z_1}(\conv{o,P})+\sum_{i=2}^r\fv_{2\bar\z_2}(\conv{o,v_{i-1},v_i})\\
		&\qquad +c_1\rho_{\frac{\pi}{2}}\mv(P)+\tilde c_1\rho_{\frac{\pi}{2}}\mv(\conv{o,P})
		+c_2\rho_{\frac{\pi}{2}}\ve(\conv{o,P})+\tilde c_2\rho_{\frac{\pi}{2}}\ve(\conv{o,v_1,\ldots,v_r}).
	\end{align*}
	Furthermore, Lemma \ref{thm:simpval} yields
	\[\sum_{i=2}^r\fv_{2\bar\z_2}(\conv{o,v_{i-1},v_i})=\fv_{2\bar\z_2}(\conv{o,P})-\fv_{2\bar\z_2}(P).\]
	Finally, we set $\zeta_1=-2\bar\z_2$ and $\z_2=2(\bar\z_1+\bar\z_2)$ to conclude the proof.
\end{proof}

In the final step, we extend Theorem \ref{thm:mainN0} to $\poly$.

\begin{proof}[Proof of Theorem \ref{thm:mainN}]
	Let $\z_1,\z_2:[0,\infty)$ be solutions of Cauchy's functional equation.
	First, due to Lemmas \ref{thm:simpval} and \ref{thm:contra},
	$\fv_{\z_1}$ is an $\SL$ contravariant valuation on $\poly$.
	Next, for $P,Q\in\poly$ with $P\cup Q\in\poly$, we have
	$\conv{o,P\cup Q}=\conv{o,P}\cup\conv{o,Q}$ and $\conv{o,P\cap Q}=\conv{o,P}\cap\conv{o,Q}$.
	Notice that $\conv{o,\f P}=\f\conv{o,P}$ for all $\f\in\SL$ and $P\in\poly$.
	Again by Lemmas \ref{thm:simpval} and \ref{thm:contra}, we obtain that
	the function $P\mapsto\fv_{\z_2}(\conv{o,P})$ for $P\in\poly$
	is also an $\SL$ contravariant valuation on $\poly$.
		
	It remains to show the reverse statement. Indeed, we only need to show
	that $ Z $ has the corresponding representation on $sT^k$ and $s\tilde T^k$ for $s>0$ and $0\leq k\leq n$.
	By Theorem \ref{thm:mainN0}, there exists a solution of Cauchy's functional
	equation $\eta_1:[0,\infty)$ such that
	\[ Z (sT^k)=\fv_{\eta_1}(sT^k).\]
	
	Let $\spl$ be the set of simplices in $\R^n$ with one vertex at the origin.
	For any $T\in\splo$, we write $\tilde T$ as its facet opposite to the origin.
	We define the new map $\tilde Z :\spl\to\R$ by $\tilde Z(T)= Z \tilde(T)$
	for every $T\in\splo$ and $Z\{o\}=o$. It is not hard to check that $\tilde Z $ is
	an $\SL$ contravariant valuation on $\spl$. From the proof of Theorem \ref{thm:mainN0},
	one can see that Theorem \ref{thm:mainN0} also holds on $\spl$.
	Hence there exists a solution of Cauchy's functional equation $\eta_2:[0,\infty)$ such that
	\[ Z (s\tilde T^k)=\tilde Z (sT^k)=\fv_{\eta_2}(sT^k).\]
	
	Now, we set $\z_1=\eta_1-\eta_2$ and $\z_2=\eta_2$ such that
	\eqref{eqn:mainN} holds for both $sT^k$ and $s\tilde T^k$ for $0\leq k\leq n$,
	which completes the proof by Lemma \ref{lemuq2}.
\end{proof}


\section*{Acknowledgement}
\addcontentsline{toc}{section}{Acknowledgement}

The work of the first author was supported in part by the Austrian Science Fund (FWF M2642 and I3027)
and by the National Natural Science Foundation of China (Project 11671249).
The work of the second author was supported in part by
the National Natural Science Foundation of China (Project 11701373)
and by Shanghai Sailing Program 17YF1413800.
The second author is the corresponding author.


\end{document}